\numberwithin{equation}{section}
\newtheorem{Thm}{Theorem}[section]
\newtheorem{Prop}[Thm]{Proposition}
\newtheorem{Lem}[Thm]{Lemma}
\newtheorem{Cor}[Thm]{Corollary}
\theoremstyle{definition}\newtheorem{Def}[Thm]{Definition}
\newtheorem{Ex}[Thm]{Example}
\newtheorem{Rem}[Thm]{Remark}
\newtheorem{Rems}[Thm]{Remarks}
\theoremstyle{definition}
\newcommand{\N}{\mathbb{N}}
\newcommand{\Q}{\mathbb{Q}}
\newcommand{\Z}{\mathbb{Z}}
\newcommand{\SP}{\mathbb{P}}
\newcommand{\Int}{\textnormal{Int}}
\newcommand{\mcP}{\mathcal{P}}
\newcommand{\hZ}{\widehat{\mathbb{Z}}}
\newcommand{\Pirr}{{\mathcal P}_{\mathrm{irr}}}
\def\mm{\mathfrak m}    
\def\pp{\mathfrak p}    
\def\II{\mathfrak I}    
\def\Mm{\mathfrak M}    
\def\be{\begin{equation}}
\def\ee{\end{equation}}
\title{Polynomial overrings of $\Int(\Z)$}
\date{\today}
\author{Jean-Luc Chabert\footnote{LAMFA, UMR-CNRS 7352, University of Picardie, 33 rue Saint Leu, 80039 Amiens, France. E-mail: jean-luc.chabert@u-picardie.fr.}, Giulio Peruginelli\footnote{Department of Mathematics, University of Padova, Via Trieste, 63
35121 Padova, Italy. E-mail: gperugin@math.unipd.it.} \,\footnote{This research has been supported by the grant "Assegni Senior" of the University of Padova.}}
\begin{document}

\leftmark{\noindent   J. Commut. Algebra 8 (2016), no. 1, 1-28.\;\; \footnotesize{\href{http://dx.doi.org/10.1216/JCA-2016-8-1-1}{http://dx.doi.org/10.1216/JCA-2016-8-1-1}}.}
{\let\newpage\relax\maketitle} 


\begin{abstract}
We show that every polynomial overring of the ring $\Int(\Z)$ of polynomials which are integer-valued over $\Z$ may be considered as the ring of polynomials which are integer-valued over some subset of $\hZ$, the profinite completion of $\Z$ with respect to the fundamental system of neighbourhoods of $0$ consisting of all non-zero ideals of $\Z.$
\end{abstract}
\medskip
\noindent{\small \textbf{Keywords}: Integer-valued polynomial, Pr\"ufer domain, Overring, Irredundant representation}

\noindent{\small  \textbf{MSC Classification codes}: 13F20, 13B30, 13F05, 13F30}

\section*{Introduction}

The classical ring of integer-valued polynomials, namely,
$$\Int(\Z)=\{f\in\Q[X]\mid f(\Z)\subseteq\Z\},$$ 
is known to be a 2-dimensional Pr\"ufer domain (see for instance \cite[\S VI.1]{CaCh}). Thus, all the overrings of $\Int(\Z)$, that is, rings between $\Int(\Z)$ and its quotient field $\Q(X),$ are well-known {\em a priori}: they are intersections of localizations of $\Int(\Z)$ at its prime ideals, which are themselves well-known valuation domains. However, the spectrum of $\Int(\Z)$ turns out to be uncountable, so that, these intersections of localizations are not so easy to characterize. The aim of this paper is to classify the `polynomial overrings' of $\Int(\Z),$ that is, rings lying between $\Int(\Z)$ and $\Q[X]$. We first describe them as particular intersections of some families of valuation domains. 
Furthermore, we will see that the polynomial overrings of $\Int(\Z)$ may be characterized as rings of polynomials which are integer-valued over some subset of $\Z$ or, more generally, of $\hZ$, the profinite completion of $\Z$ with respect to the fundamental system of neighbourhoods of $0$ consisting of all non-zero ideals of $\Z.$

\section{Prime spectrum of $\Int(\Z)$ and localizations}\label{primespectrumIntZ}

We first recall the structure of the spectrum of $\Int(\Z)$ \cite[Prop. V.2.7]{CaCh}. A non-zero prime ideal $\mathfrak{P}$ of $\Int(\Z)$ lies over a prime ideal of $\Z,$ and hence, there are two cases:

\smallskip
 
$\bullet\; \mathfrak{P}\cap \Z=(0).$ Then $\mathfrak{P}$ is of the form
$$\mathfrak{P}=\mathfrak{P}_q=q(X)\Q[X]\cap\Int(\Z),\;\textrm{ where } q\in\Z[X] \textrm{ is irreducible}.$$
These ideals $\mathfrak{P}_q$ have height $1$ and the polynomial $q$ is uniquely determined.

\medskip

$\bullet \;\mathfrak{P}\cap \Z=p\Z$ where $p\in\SP$ (we denote by $\SP$ the set of prime numbers).
Then $\mathfrak{P}$ is of the form
$$\mathfrak{P}=\mathfrak{M}_{p,\alpha}=\{f\in\Int(\Z)\mid f(\alpha)\in p\Z_p\},\;\textrm{ where }\alpha\in\Z_p\,.$$
These ideals $\mathfrak{M}_{p,\alpha}$ are maximal ideals and the residue field of $\mathfrak{M}_{p,\alpha}$ is isomorphic to $\Z/p\Z.$ 
More precisely, $$\Z_p\ni\alpha\mapsto\mathfrak{M}_{p,\alpha}\in\textrm{Max}(\Int(\Z))$$
is a one-to-one correspondence between $\Z_p$ and the set of prime ideals of $\Int(\Z)$ lying over $p$. [Recall that $\Z_p$, the ring of $p$-adic integers, is uncountable.]

\smallskip

Moreover, given $q$ irreducible in $\Z[X]$, $p\in\SP$ and $\alpha\in\Z_p$, the following holds \cite[Prop. V.2.5]{CaCh}:
\be\label{0}\mathfrak{P}_q\subset\mathfrak{M}_{p,\alpha}\Leftrightarrow q(\alpha)=0\,.\ee 
Consequently, given an irreducible polynomial $q\in\Z[X],$ for a fixed prime $p$, there are at most finitely many ideals $\mathfrak{M}_{p,\alpha}$ containing $\mathfrak{P}_q\,;$ on the other hand, it is known that there exist infinitely many primes $p$ such that $q(X)$ has a root $\alpha$ in $\Z_p$, that is, $\mathfrak{P}_q$ is contained in infinitely many $\mathfrak{M}_{p,\alpha}$'s \cite[Prop. V.2.8]{CaCh}. In particular, the prime ideals $\mathfrak{P}_q$ are not maximal. From equivalence (\ref{0}), it follows also that the height of $\mathfrak{M}_{p,\alpha}$ is $1$ if and only if $\alpha$ is transcendental over $\Q$, it is 2 otherwise. 

\medskip

We now describe the localizations of $\Int(\Z)$ with respect to these prime ideals (see for example \cite[Prop. VI.1.9]{CaCh}). They are the following valuation domains of the field $\Q(X)$:

\smallskip

$\bullet\quad\quad \Int(\Z)_{\mathfrak{P}_q}=\Q[X]_{(q)}=\left\{\frac{f(X)}{g(X)}\in\Q(X)\mid q\nmid g\right\}\,.$

\smallskip

$\bullet\quad\quad \Int(\Z)_{\mathfrak{M}_{p,\alpha}}=V_{p,\alpha}=\{\varphi\in\Q(X) \mid \varphi(\alpha)\in\Z_p\}\,,$

\medskip

\noindent Consequently, $\Int(\Z)$ is a Pr\"ufer domain. Moreover, 
\begin{equation}\label{BqMpalpha}
V_{p,\alpha}\subset\Q[X]_{(q)}\Leftrightarrow \mathfrak{P}_q\subset\mathfrak{M}_{p,\alpha} \Leftrightarrow q(\alpha)=0\,.
\end{equation}

We are interested in the representation of $\Int(\Z)$ as an intersection of valuation overrings. For this purpose, we have to make some choices. First, we may represent $\Int(\Z)$ as the intersection of all of its valuation overrings:
$$\Int(\Z)=\bigcap_{q\in\Pirr(\Z)}\Q[X]_{(q)}\;\cap\; \bigcap_{p\in\mathbb{P}}\bigcap_{\alpha\in\Z_p}V_{p,\alpha}\,$$
where $\Pirr(\Z)$ denotes the set of irreducible polynomials of $\Z[X]$. We may look for a more optimal representation of $\Int(\Z)$. To begin with, we may discard from the above representation the valuation domains which are not minimal valuation overrings of $\Int(\Z)$, or, equivalently, the valuation domains which does not correspond to maximal ideals of $\Int(\Z)$ because $\Int(\Z)$ is a Pr\"ufer domain: 
 \begin{equation}\label{IntZ}
\Int(\Z)=\bigcap_{p\in\mathbb{P}}\bigcap_{\alpha\in\Z_p}V_{p,\alpha}\,.
\end{equation}
The above intersection in (\ref{IntZ}) is uncountable and it is far from being irredundant. Recall that, given a domain $D$ with quotient field $K$, and a family of valuation overrings $\Lambda=\{V_{\lambda}\}$ of $D$ (that is, $D\subseteq V_{\lambda}\subset K$) such that $D=\bigcap_{\lambda}V_{\lambda}$, the representation $D=\bigcap_{\lambda}V_{\lambda}$ is said {\em irredundant} if no $V_{\lambda}$ is superfluous, that is, for each $\lambda$, $D$ is strictly contained in the intersection of the member of $\Lambda$ distinct from $V_{\lambda}$ (\cite{GilmHeinz}). For the domain $\Int(\Z)$, there are suitable countable intersections as shown, for instance, by the following equality:
\be\label{aaa}\Int(\Z)=\bigcap_{p\in\mathbb{P}}\bigcap_{\alpha\in\Z}V_{p,\alpha}\,.\ee
The fact that every rational function on the right-hand side of equality~(\ref{aaa}), that is, that every $\varphi\in\Q(X)$ such that $\varphi(\Z)\subseteq\Z$ is a polynomial follows from the observation that a rational function which takes integral values on infinitely many integers is a polynomial (see~\cite[VIII.2 (93)]{bib:polyand} or \cite[Prop. X.1.1]{CaCh}).

So, every $V_{p,\alpha}$, $\alpha\in \Z_p\setminus \Z$, $p\in\SP$, in the representation (\ref{IntZ}) is superfluous; actually, we will show that, for each $p\in\SP$ and $\alpha\in\Z_p$, every $V_{p,\alpha}$ in the above representation is superfluous (Corollary \ref{superfluousVpa}). However, there is no irredundant representation of $\Int(\Z)$ as an intersection of valuation overrings because there is no subset of $\Z$ which is minimal among the subsets of $\Z$ which are dense in $\Z$ for every $p$-adic topology (see Corollary \ref{irredundantlocal} and Remark \ref{Remark52}). Thus, in the sequel, the only representations that we will consider as `canonical' will be the intersections of all the minimal valuation overrings as in (\ref{IntZ}).

\medskip

After some generalities about the overrings of $\Int(\Z)$ in Section 2, we consider the representations of the overrings of $\Int(\Z_{(p)}),$ where $p$ is a fixed prime number and $\Z_{(p)}$ denotes the localization of $\Z$ at $p\Z$ in Section 3, as intersections of valuation domains (Proposition \ref{representations}) and then, as rings of integer-valued polynomials on a subset of $\Z_p$ (Theorem \ref{containmentoverrings}); in particular, we show that there is a one-to-one correspondence between the set of polynomial overrings of $\Int(\Z_{(p)})$ and the closed subsets of $\Z_p$. In order to globalize these results, we study in Section 4 the valuation overrings of an intersection of valuation domains, characterizing those which are superfluous (Corollary \ref{superfluousVpa} and Theorem \ref{vqunder}). Finally, the polynomial overrings of $\Int(\Z)$ are described in Section 5 by their representations as intersection of valuation overrings (Proposition \ref{representationsglobal}), and in Section 6 with an interpretation as integer-valued polynomials on a subset of the ring $\hZ$ (Theorem \ref{globalintpoly}).


\section{Generalities about overrings of $\Int(\Z)$}
We are interested in rings $R$ which are overrings of $\Int(\Z)$, that is,
\begin{equation}\label{1}
\Int(\Z)\subseteq R\subseteq \Q(X),
\end{equation}
and, in particular, by the {\em polynomial overrings} of $\Int(\Z),$ that is, the rings $R$ which are  contained in $\Q[X]$. 

Since $\Int(\Z)$ is a Pr\"ufer domain, we first recall the following fundamental result of \cite{GilmOverring} (see also \cite[Theorem 26.1]{Gilm}) concerning overrings $D'$ of a Pr\"ufer domain $D$, that is, rings $D'$ such that $D\subseteq D'\subseteq K$ where $K$ denotes the quotient field of $D$. 

\begin{Prop}\label{overringPrufer}
Let $D'$ be an overring of a Pr\"ufer domain $D,$ and let $\mathcal{S}_{D'}$ be the set of prime ideals $\pp$ of $D$ such that $\pp D'\subsetneq D'.$ Then
\begin{itemize}
\item[$(i)$] If $\pp'$ is a prime ideal of $D'$ and $\pp=\pp'\cap D$, then $D_{\pp}=D'_{\pp'}$ and $\pp'=\pp D_{\pp}\cap D'$.  Therefore $D'$ is Pr\"ufer.
\item[$(ii)$] If $\pp$ is a non-zero prime ideal of $D$, then $\pp$ is in $\mathcal{S}_{D'}$ if and only if $D_{\pp}\supseteq D'$. Moreover, $D'=\bigcap_{\pp\in\mathcal{S}_{D'}}D_{\pp}$.
\item[$(iii)$] Every ideal $\II'$ of $D'$ is an extended ideal, that is, $ \II'=(\II'\cap D)\,D'.$ 
\item[$(iv)$] The spectrum of $D'$ is $\{\pp D'\mid \pp\in\mathcal{S}_{D'}\}$.
\end{itemize}
\end{Prop}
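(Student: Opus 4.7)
The proof turns on one structural fact: because $D$ is Pr\"ufer, every localization $D_\pp$ at a nonzero prime is a valuation domain, and the overrings of a valuation domain $V$ are precisely its localizations at prime ideals (and are themselves valuation domains). The plan is to apply this to $V = D_\pp$ in order to identify each $D'_{\pp'}$ with a suitable $D_\mathfrak{q}$, and then to derive (ii)--(iv) from (i) by standard local-global manipulations.

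For (i), let $\pp' \in \textrm{Spec}(D')$ and $\pp = \pp' \cap D$. The inclusion $D \subseteq D'$ sends $D \setminus \pp$ into $D' \setminus \pp'$, so $D_\pp \subseteq D'_{\pp'}$. Hence $D'_{\pp'}$ is an overring of the valuation domain $D_\pp$, so $D'_{\pp'} = D_\mathfrak{q}$ for some prime $\mathfrak{q} \subseteq \pp$ of $D$. Contracting the maximal ideals on both sides back to $D$ gives $\mathfrak{q} = \pp$. Hence $D'_{\pp'} = D_\pp$ is a valuation domain for every nonzero prime $\pp'$, so $D'$ is Pr\"ufer; moreover $\pp' = \pp' D'_{\pp'} \cap D' = \pp D_\pp \cap D'$.

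For (ii), if $\pp D' \subsetneq D'$ then $\pp D'$ is contained in some maximal ideal $\pp'$ of $D'$, and $\pp \subseteq \pp' \cap D$ combined with (i) yields $D' \subseteq D'_{\pp'} = D_{\pp' \cap D} \subseteq D_\pp$. Conversely, $D' \subseteq D_\pp$ forces $\pp D' \subseteq \pp D_\pp \cap D' \subsetneq D'$. The representation $D' = \bigcap_{\pp \in \mathcal{S}_{D'}} D_\pp$ then follows from the general $D' = \bigcap_{\pp' \in \textrm{Max}(D')} D'_{\pp'}$ via (i), noting $\pp' \cap D \in \mathcal{S}_{D'}$ since $(\pp' \cap D) D' \subseteq \pp' \subsetneq D'$.

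For (iii), I would check equality locally: it suffices to verify $\II' D'_{\pp'} = (\II' \cap D)\, D'_{\pp'}$ at each maximal $\pp'$ of $D'$, using the standard fact that $\II' = \bigcap_{\pp'}(\II' D'_{\pp'} \cap D')$ in any domain. By (i), $D'_{\pp'} = D_\pp$; given $a \in \II'$, write $a = b/s$ with $b \in D$, $s \in D \setminus \pp$, and observe $b = a s \in \II' \cap D$ (since $s \in D \subseteq D'$), so $a \in (\II' \cap D)\, D_\pp$. Part (iv) then follows: every prime $\pp' \subset D'$ equals $(\pp' \cap D)D'$ by (iii), with $\pp' \cap D \in \mathcal{S}_{D'}$; conversely, for $\pp \in \mathcal{S}_{D'}$, the ideal $\pp D_\pp \cap D'$ is prime in $D'$ (it is the preimage of the maximal ideal of $D_\pp$ under $D' \hookrightarrow D_\pp$), and applying (iii) to it gives $\pp D_\pp \cap D' = \pp D'$, so $\pp D'$ is prime. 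The technical kernel of the whole argument is really (i) --- the valuation-domain identification $D'_{\pp'} = D_\pp$; once this is in hand, parts (ii)--(iv) are essentially formal consequences.
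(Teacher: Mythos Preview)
The paper does not supply its own proof of this proposition: it is quoted verbatim as a known result of Gilmer \cite{GilmOverring} (see also \cite[Theorem~26.1]{Gilm}), so there is no in-paper argument to compare against. Your proof is correct and follows the classical line one finds in Gilmer's treatment: the key move is part~(i), identifying $D'_{\pp'}$ as an overring of the valuation domain $D_\pp$ and hence as $D_\mathfrak{q}$ for some $\mathfrak{q}\subseteq\pp$, then pinning down $\mathfrak{q}=\pp$ by contracting the maximal ideal; parts (ii)--(iv) are then, as you say, formal local--global consequences. One small point worth making explicit in (iii): when you write $a=b/s$ with $b\in D$ and $s\in D\setminus\pp$, you are using $a\in D'\subseteq D'_{\pp'}=D_\pp$, which is available from (i); with that noted, the argument is complete.
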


In view of the previous proposition, we will use the following terminology: a prime ideal $\pp$ of $D$ is said to {\em survive} in $D'$ if its extension $\pp D'$ in $D'$ is a proper ideal (that is, $\pp D'\subsetneq D'$, in which case $\pp D'$ is a prime ideal of $D'$ by the above result) and $\pp$ is said to be {\em lost} in $D'$ otherwise (that is, if $\pp D'=D'$). In particular, every overring $D'$  of a Pr\"ufer domain $D$ is equal to the intersection of the localizations of $D$ at those prime ideals $\pp$ of $D$ which survive in $D'$.

\begin{Ex}\label{Q[X]}
Clearly,
$$\Q[X]=\bigcap_{q\in\mcP_{\textnormal{irr}}}\Int(\Z)_{\mathfrak{P}_q}=\bigcap_{q\in\mcP_{\textnormal{irr}}}\Q[X]_{(q)}\,,$$
where $\mcP_{\textnormal{irr}}=\mcP_{\textnormal{irr}}(\Z)$ is the set of irreducible polynomials in $\Z[X]$. By \cite[Remark 1.12]{GilmHeinz}, this representation of $\Q[X]$ is irredundant, since $\Q[X]$ is a Dedekind domain and the set of maximal ideals of $\Q[X]$ is in one-to-one correspondence with $\mcP_{\textnormal{irr}}$, namely $\mcP_{\textnormal{irr}}\ni q\mapsto q(X)\Q[X]$. 

Consequently, for a polynomial overring $R$, each prime ideal $\mathfrak{P}_q$ of $\Int(\Z)$  must survive in $R$ since it survives in $\Q[X],$ and we have $$\mathfrak{P}_q\, R=q(X)\Q[X]\cap R.$$
\end{Ex}

Since we want to describe explicitly $R$ in terms of those prime ideals of the spectrum of $\Int(\Z)$ which survive in $R$, we are mostly interested in the other prime ideals, those lying over a prime. They are called {\em unitary} prime ideals because they contain nonzero constants.

\smallskip

The following result of Gilmer and Heinzer is of fundamental importance in order to decide whether an ideal $\pp$ of $\Int(\Z)$ survives or not in some intersection of valuation overrings of $\Int(\Z)$.

\begin{Prop}\emph{(\cite[Prop. 1.4]{GilmHeinz})}\label{GH}
Let $D$ be a Pr\"ufer domain and let $\{\pp\}\cup\{\pp_{\alpha}\}_{\alpha\in\Lambda}$ be a family of prime ideals of $D$. Then $D_{\pp}\supseteq\bigcap_{\alpha\in\Lambda}D_{\pp_{\alpha}}$ if and only if, for every finitely generated ideal $\II\subseteq \pp$, there exists $\alpha\in\Lambda$ such that $\II\subseteq \pp_{\alpha}$.
\end{Prop}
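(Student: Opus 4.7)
The plan is to exploit two standard facts about a Pr\"ufer domain $D$ with quotient field $K$: (a) every finitely generated ideal is invertible, and (b) for $x \in K$ and a prime $\pp$ of $D$, membership $x \in D_\pp$ is governed by the conductor ideal $(D :_D x) = \{d \in D : dx \in D\}$, specifically $x \in D_\pp \Leftrightarrow (D :_D x) \not\subseteq \pp$. A key additional observation, which follows from (a), is that $(D :_D x)$ is itself finitely generated, since it equals $(D+xD)^{-1}$, and the inverse of a finitely generated invertible fractional ideal is finitely generated.

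For the direction $(\Leftarrow)$, I would take any $x \in \bigcap_{\alpha\in\Lambda} D_{\pp_\alpha}$ and observe that $(D:_D x)$ is a finitely generated ideal of $D$ not contained in any $\pp_\alpha$ (witnesses come from writing $x = a_\alpha/s_\alpha$ with $s_\alpha \notin \pp_\alpha$, so that $s_\alpha \in (D:_D x)\setminus \pp_\alpha$). Applying the contrapositive of the hypothesis --- a finitely generated ideal contained in no $\pp_\alpha$ cannot lie in $\pp$ --- yields $(D:_D x)\not\subseteq \pp$, whence $x \in D_\pp$.

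For the direction $(\Rightarrow)$, I would argue by contradiction: given a finitely generated $\II \subseteq \pp$ with $\II \not\subseteq \pp_\alpha$ for every $\alpha$, invertibility of $\II$ combined with the valuation ring structure of $D_{\pp_\alpha}$ forces $\II D_{\pp_\alpha} = D_{\pp_\alpha}$, and taking inverses gives $\II^{-1} \subseteq D_{\pp_\alpha}$ for every $\alpha$. Intersecting over $\Lambda$ and invoking the assumption $D_\pp \supseteq \bigcap_\alpha D_{\pp_\alpha}$ yields $\II^{-1}\subseteq D_\pp$. Multiplying by $\II$ then produces the contradiction $D = \II\II^{-1}\subseteq \II D_\pp \subseteq \pp D_\pp$, since $\pp D_\pp$ is a proper ideal of $D_\pp$.

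The main technical input in both directions is the finite generation of $(D:_D x)$ in a Pr\"ufer domain; this is what bridges the gap between the hypothesis, phrased in terms of finitely generated subideals of $\pp$, and the conclusion, which concerns arbitrary elements of the intersection $\bigcap_\alpha D_{\pp_\alpha}$. I would not expect any serious obstacle beyond this: once the conductor criterion and invertibility are in place, both implications reduce to short ideal-theoretic manipulations.
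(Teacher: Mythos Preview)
The paper does not supply a proof of this proposition; it is quoted verbatim from Gilmer--Heinzer \cite[Prop.~1.4]{GilmHeinz} and used as a black box throughout. So there is no in-paper argument to compare against.

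Your argument is correct and is essentially the standard one. A couple of minor remarks. In the $(\Rightarrow)$ direction you do not actually need the valuation-ring structure of $D_{\pp_\alpha}$: the single fact $\II\not\subseteq\pp_\alpha$ already gives an element of $\II$ that is a unit in $D_{\pp_\alpha}$, so $\II D_{\pp_\alpha}=D_{\pp_\alpha}$ in any domain. The genuinely Pr\"ufer input in that direction is only the invertibility of $\II$, which lets you pass to $\II^{-1}\subseteq D_{\pp_\alpha}$. In the $(\Leftarrow)$ direction your identification $(D:_D x)=(D+xD)^{-1}$ is the right move; the finite generation of the inverse follows for any invertible ideal (write $1=\sum a_ib_i$ with $a_i\in I$, $b_i\in I^{-1}$, and observe the $b_i$ generate $I^{-1}$), so the Pr\"ufer hypothesis enters only to guarantee that the two-generated ideal $D+xD$ is invertible. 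With those clarifications your proof goes through without gaps.
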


\begin{Cor}\label{minimal}
If $D_{\pp}$ is not a minimal valuation overring of the Pr\"ufer domain $D,$ then $D_{\pp}$ is superfluous in each representation of $D$ as an intersection of valuation overings in which $D_{\pp}$ appears.
\end{Cor}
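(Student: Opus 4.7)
The plan is to combine Propositions~\ref{overringPrufer} and~\ref{GH}. Write the given representation as $D=\bigcap_{\lambda\in\Lambda}V_\lambda$ with $V_{\lambda_0}=D_{\pp}$ for some distinguished index $\lambda_0$. By Proposition~\ref{overringPrufer}(i), every valuation overring of a Pr\"ufer domain is of the form $V_\lambda=D_{\pp_\lambda}$, where $\pp_\lambda$ is the contraction to $D$ of the maximal ideal of $V_\lambda$. Moreover, since $D$ is Pr\"ufer, the minimal valuation overrings correspond precisely to the maximal ideals of $D$; so the hypothesis that $D_{\pp}$ is not minimal says exactly that $\pp$ is not maximal, and I may fix a maximal ideal $\Mm$ of $D$ with $\pp\subsetneq\Mm$.

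To prove that $D_{\pp}$ is superfluous, it suffices to check the inclusion
$$D_{\pp}\;\supseteq\;\bigcap_{\lambda\neq\lambda_0}D_{\pp_\lambda},$$
for this would force $\bigcap_{\lambda\neq\lambda_0}D_{\pp_\lambda}=\bigcap_{\lambda\in\Lambda}D_{\pp_\lambda}=D$, which is the defining property of superfluousness. By the Gilmer--Heinzer criterion (Proposition~\ref{GH}), this inclusion is equivalent to showing that every finitely generated ideal $\II\subseteq\pp$ is contained in $\pp_\lambda$ for some $\lambda\neq\lambda_0$.

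Given such an $\II$, I would pick an element $x\in\Mm\setminus\pp$ and enlarge $\II$ to the finitely generated ideal $\II+(x)$, which lies in $\Mm$. Since $D=\bigcap_\lambda D_{\pp_\lambda}$ is contained in $D_{\Mm}$, one application of Proposition~\ref{GH} (with $\pp$ there replaced by $\Mm$) delivers an index $\lambda\in\Lambda$ such that $\II+(x)\subseteq\pp_\lambda$. Because $x\in\pp_\lambda\setminus\pp$, we have $\pp_\lambda\neq\pp$ and hence $\lambda\neq\lambda_0$; of course $\II\subseteq\II+(x)\subseteq\pp_\lambda$, which is exactly what was required. The only real step is spotting the auxiliary trick of enlarging $\II$ by an element of $\Mm\setminus\pp$ so as to push the containing prime away from $\pp$; once that is done, the statement reduces to a double application of Proposition~\ref{GH}.
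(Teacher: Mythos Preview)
Your proof is correct and follows essentially the same approach as the paper's: both hinge on the trick of enlarging a finitely generated ideal $\II\subseteq\pp$ by an element of a maximal ideal $\Mm\supseteq\pp$ and then invoking Proposition~\ref{GH}. The only cosmetic difference is that the paper argues by contrapositive (assuming $D_{\pp}$ is not superfluous and concluding $\pp$ is maximal), whereas you argue directly.
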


\begin{proof}
(See also~\cite[Lemma 1.6]{GilmHeinz}.) Let $\bigcap_{\alpha\in\Lambda}D_{\pp_\alpha}$be any representation of $D$, let $\alpha_0\in\Lambda,$ and assume that $D_{\pp_{\alpha_0}}$ is not a superfluous element in this representation. By Proposition~\ref{GH}, there exists a finitely generated ideal $\II\subseteq \pp_{\alpha_0}$ such that $\II\not\subseteq \pp_\alpha$ for every $\alpha\in\Lambda\setminus\{\alpha_0\}.$ Let $\mm$ be a maximal ideal of $D$ containing $\pp_{\alpha_0}$ and let $x$ be any element of $\mm$. Since $D=\bigcap_{\alpha\in\Lambda}D_{\pp_\alpha}$ and $\II+(x)\not\subseteq \pp_\alpha$ for $\alpha\not=\alpha_0$, necessarily $\II+(x)\subseteq \pp_{\alpha_0}$. Finally, $\pp_{\alpha_0}=\mm$ is maximal, which is equivalent to the fact that $D_{\pp_{\alpha_0}}$ is a minimal valuation overring of $D$.
\end{proof}

\begin{Rems}
(1) The converse of the previous corollary may be false: there are minimal valuation overrings which may be superfluous (cf. Example~\ref{exbelow} below). 

(2) We have to take care that there is another notion of minimality which depends on the representation that we consider: a valuation domain which is minimal with respect to the elements of some representation of $D$ is not necessarily minimal with respect to another representation (and in particular, with respect to all the valuation overrings of $D$). For instance, let $p\in\SP$, $\alpha_n\in\Z$ ($n\geq 0$) and $q\in\Pirr(\Z)$ such that $q(\alpha_0)=0$ and $\lim_{n\to+\infty} v_p(\alpha_n-\alpha_0)=+\infty.$ Let $V_q=\Q[X]_{(q)}$. Then, we have:
\be\label{exmini}D\doteqdot(\cap_{n\geq 0}V_{p,\alpha_n})\cap V_q = \cap_{n\geq 0}V_{p,\alpha_n} = \cap_{n>0}V_{p,\alpha_n} = 
(\cap_{n>0}V_{p,\alpha_n})\cap V_q \,.\ee
The first equality follows from the fact that $V_q\supset V_{p,\alpha_0}$ and the second equality from the fact that $\alpha_0=\lim_{n\to\infty} \alpha_n$ in $\Z_p$ (see Lemma \ref{lemmalopmore}). The valuation domain $V_q$ is not minimal with respect to the elements of the first representation, while it is for the last one. 

(3) Obviously, a valuation domain which is not minimal with respect to some representation is superfluous for this representation, but Corollary~\ref{minimal} says something stronger since a minimal valuation overring of $D$ which appears in some representation of $D$ is {\em a fortiori} minimal for this representation. In the last representation of $D$ given in (\ref{exmini}), $V_q$ is superfluous although it is minimal for this representation, but we could be sure that it is superfluous because it is not a minimal overring of $D$ as shown by the first representation.
\end{Rems}

Thus, we emphasize that when we speak of a minimal valuation overring of $D$ it is always a valuation domain which is minimal with respect to the family of {\em all} the valuation overrings of $D$.

Another important example is the localization of $\Int(\Z)$ with respect to a prime $p\in\Z$.

\begin{Ex}\label{ExamplelocalizzIntZ}
For every fixed prime $p$, we have
$$\Int\left(\Z_{(p)}\right)=\Int(\Z)_{(p)}$$
where $\Int(\Z)_{(p)}$ is the localization of the $\Z$-module $\Int(\Z)$ at $p\Z$, namely, $\Int(\Z)_{(p)}=\left\{\frac{f(X)}{s}\mid f\in\Int(\Z), s\in\Z\setminus p\Z\right\}$ (see \cite[Thm I.2.3]{CaCh}). Consequently, the prime ideals of $\Int(\Z)$ which survive in $\Int(\Z_{(p)})$ are the non-unitary ideals $\mathfrak{P}_q$ and the unitary ideals $\mathfrak{M}_{p,\alpha}$ lying over the prime $p$. By a slight abuse of notation, we still denote the corresponding extended ideals in $\Int(\Z_{(p)})$ by $\mathfrak{P}_q$ and $\mathfrak{M}_{p,\alpha}$, respectively. Then we have:
$$\Int(\Z_{(p)})=\bigcap_{q\in\mcP_{\textnormal{irr}} }\Q[X]_{(q)}\cap\bigcap_{\alpha\in\Z_p}V_{p,\alpha} = \bigcap_{q\in\mcP_{\textnormal{irr}}}\Q[X]_{(q)}\cap\bigcap_{\alpha\in\Z}V_{p,\alpha}\,.$$
But, in this local case, an ideal $\mathfrak{P}_q$ may be maximal in $\Int(\Z_{(p)}): \mathfrak{P}_q$ is maximal if and only if $q(X)$ has no root in $\Z_p$ (\cite[Prop. V.2.5]{CaCh}). Therefore, if $\mcP_{\textnormal{irr}}^{\Z_p}$ denotes the set of irreducible polynomials over $\Z$ which have no roots in $\Z_p$, we have the following representation of $\Int(\Z_{(p)})$ as the intersection of all its minimal valuation overrings (which correspond to the maximal ideals of $\Int(\Z_{(p)})$ ):
\be\Int(\Z_{(p)})=\bigcap_{q\in\mcP_{\textnormal{irr}}^{\Z_p}}\Q[X]_{(q)}\cap\bigcap_{\alpha\in\Z_p}V_{p,\alpha}\,.\ee
\end{Ex}

\begin{Rem}\label{PZpnonempty}
It is not difficult to see that ${\mathcal P}_{\mathrm{irr}}^{\Z_p}$ is non-empty: let $g\in\Z_p[X]$ be a monic irreducible polynomial of degree $d\geq 2$. By a corollary of Krasner's lemma (see for instance \cite[Chapter V, Proposition 5.9]{Nark}), every monic polynomial $q\in \Z_p[X]$ of degree $d$ which is sufficiently close to $g(X)$ with respect to the $p$-adic valuation is also irreducible over $\Z_p[X]$. Clearly, we may choose such a polynomial $q(X)$ with coefficients in $\Z$. Then, in particular, $q(X)$ is irreducible in $\Z[X]$ and has no roots in $\Z_p$.
\end{Rem}
\vskip0.4cm
If we localize each ring of (2.1) at $p$ (that is, with respect to the multiplicative set $\Z \setminus p\Z$), since $\Int(\Z)$ is well-behaved under localization as seen in Example \ref{ExamplelocalizzIntZ}, we get
\begin{equation}\label{Rp}
\Int(\Z_{(p)})\subseteq R_{(p)}\subseteq\Q(X)
\end{equation}
where $R_{(p)}=\{\frac{f(X)}{n}\mid f\in R,n\in\Z\setminus p\Z\}$. If $R$ is a polynomial overring of $\Int(\Z)$, then $R_{(p)}$ is a polynomial overring of $\Int(\Z_{(p)})$, that is, $R_{(p)}\subseteq\Q[X]$. Clearly, we have
\begin{equation}\label{R=Rp}
R=\bigcap_{p\in\mathbb{P}}R_{(p)}
\end{equation}
Hence, in order to make our work easier, we fix a prime $p$ and we continue our discussion for an overring $R$  of $\Int(\Z_{(p)})$.


\section{Polynomial overrings of $\Int\left(\Z_{(p)}\right)$}

In this section, $p$ denotes a fixed prime number and we consider overrings of $\Int(\Z_{(p)})$ that is, rings $R$ such that 
$$\Int\left(\Z_{(p)}\right)\subseteq R\subseteq \Q(X).$$

\noindent{\bf Notation}. For every overring $R$ of $\Int(\Z_{(p)}),$ we consider the following subsets:
\begin{enumerate} 
\item A subset of the ring $\Z_p$ of $p$-adic integers
\begin{equation}\label{OmegaRp}
Z_p(R)\doteqdot\{\alpha\in\Z_p\mid\mathfrak{M}_{p,\alpha}R\subsetneq R\}
\end{equation}
\item For every $\alpha\in\Z_p$ which is not the pole of some element of $R$, the following subring of the field $\Q_p$ of $p$-adic numbers
$$R(\alpha)\doteqdot\{f(\alpha)\mid f\in R\}\subseteq\Q_p\,.$$

\end{enumerate}

Note that $Z_p({R})$ indexes the set of maximal unitary ideals of $\Int(\Z_{(p)})$ which survive in $R$ under extension, and that $R(\alpha)$ is always defined for polynomial overrings of $\Int(\Z_{(p)})$.
For instance, if $R=\Int(\Z_{(p)})$, then $Z_p(R)=\Z_p$ and, for every $\alpha\in Z_p(R)\cap\Q, R(\alpha)=\Z_{(p)}$, since $\Z_{(p)}[X]\subset R_{(p)}$ and $R(\alpha)\subseteq \Z_p\cap\Q$. 
The following easy proposition characterizes the set $Z_p(R)$ for any overring $R$.

\begin{Prop}\label{CharacterizOmegaRp}
Let $R$ be an overring of $\Int(\Z_{(p)})$ and $\alpha\in\Z_p$. Then
\be\label{Z(R)}\alpha\in Z_p(R)\Leftrightarrow R\subseteq V_{p,\alpha}\Leftrightarrow R(\alpha)\subseteq\Z_p\,.\ee
Moreover, the subset $Z_p(R)$ is closed in $\Z_p$ for the $p$-adic topology.
\end{Prop}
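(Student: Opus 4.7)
The plan is to deduce the first equivalence directly from the Prüfer-overring machinery already laid out, obtain the second equivalence by unravelling the definitions of $V_{p,\alpha}$ and of $R(\alpha)$, and then prove closedness by showing each test set ``one $f$ at a time'' is closed.

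For the first equivalence, I would apply Proposition~\ref{overringPrufer}(ii) to the Prüfer domain $\Int(\Z_{(p)})$: the prime $\mathfrak{M}_{p,\alpha}$ survives in $R$ (i.e.\ $\alpha\in Z_p(R)$) if and only if its localization contains $R$, and that localization is precisely $V_{p,\alpha}$. For the second equivalence, note that $V_{p,\alpha}=\{\varphi\in\Q(X)\mid\varphi(\alpha)\in\Z_p\}$ implicitly requires $\varphi$ to be defined at $\alpha$. Hence $R\subseteq V_{p,\alpha}$ forces no element of $R$ to have a pole at $\alpha$, so that $R(\alpha)$ is well defined and sits inside $\Z_p$; the converse reads off from the same definition.

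For closedness of $Z_p(R)$, I would write
\[
Z_p(R)=\bigcap_{f\in R}Z_f,\qquad Z_f\doteqdot\{\alpha\in\Z_p\mid f\in V_{p,\alpha}\},
\]
and reduce to proving each $Z_f$ closed in $\Z_p$. Fix $f\in R$ and write $f=g/h$ with $g,h\in\Z[X]$ coprime. Take a sequence $\alpha_n\to\alpha$ in $\Z_p$ with $\alpha_n\in Z_f$. Since $\alpha_n\in Z_f$, the rational function $f$ is regular at each $\alpha_n$, hence $h(\alpha_n)\neq 0$. Continuity of polynomial evaluation on $\Z_p$ gives $g(\alpha_n)\to g(\alpha)$ and $h(\alpha_n)\to h(\alpha)$.

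The one nontrivial point—and really the main obstacle—is ruling out that $\alpha$ might be a pole of $f$, i.e.\ $h(\alpha)=0$. Here I would invoke coprimality of $g$ and $h$ in $\Q[X]$: if $h(\alpha)=0$ then $g(\alpha)\neq 0$, so $v_p(g(\alpha_n))$ stabilizes at the finite value $v_p(g(\alpha))$ while $v_p(h(\alpha_n))\to+\infty$; for large $n$ this forces $v_p(f(\alpha_n))=v_p(g(\alpha_n))-v_p(h(\alpha_n))<0$, contradicting $f(\alpha_n)\in\Z_p$. Therefore $h(\alpha)\neq 0$, so $f$ is defined at $\alpha$, and $f(\alpha)=\lim f(\alpha_n)\in\Z_p$ because $\Z_p$ is closed in $\Q_p$. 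Hence $\alpha\in Z_f$, completing the proof.
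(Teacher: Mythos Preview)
Your proof is correct and follows essentially the same approach as the paper's: the first equivalence via Proposition~\ref{overringPrufer}(ii), the second by unwinding the definitions, and closedness by writing $Z_p(R)=\bigcap_{f\in R}\{\alpha\in\Z_p\mid f(\alpha)\in\Z_p\}$. The paper dispatches the closedness of each piece in a single phrase (``by continuity of $f$''), whereas you supply the detail that a limit of regular points cannot be a pole---a worthwhile addition, since for a general overring $R$ the elements $f$ may be genuine rational functions.
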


\begin{proof}
The first equivalence follows from Proposition~\ref{overringPrufer}.  The second equivalence is straightforward from the definitions of $V_{p,\alpha}$ and $R(\alpha)$.
Concerning the last assertion, note that, for each $f\in R$, by continuity of $f,$ the subset $\{\alpha\in \Z_p\mid f(\alpha)\in \Z_p\}$ is closed in $\Z_p$. Then, we just have to remark that:

\centerline{$Z_p(R)=\bigcap_{f\in R}\;\{\alpha\in \Z_p\mid f(\alpha)\in \Z_p\}.$}\end{proof}

\begin{Cor}\label{nonunitaryRp}
Under extension, a prime ideal $\mathfrak{P}_q$ of $\Int(\Z_{(p)})$ is maximal in $R$ if and only if $q(X)$ has no roots in $Z_p(R)$.
\end{Cor}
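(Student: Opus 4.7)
The plan is to exploit the bijection between primes of $R$ and surviving primes of $\Int(\Z_{(p)})$ furnished by Proposition \ref{overringPrufer}, which reduces the question of maximality of $\mathfrak{P}_q R$ in $R$ to a question entirely inside the spectrum of $\Int(\Z_{(p)})$ that can be answered using the structure of that spectrum recalled at the start of Section \ref{primespectrumIntZ}.

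First I would invoke parts (i), (iii) and (iv) of Proposition \ref{overringPrufer} to see that the assignment $\mathfrak{p}\mapsto \mathfrak{p}R$ is an inclusion-preserving bijection between the primes of $\Int(\Z_{(p)})$ that survive in $R$ and the primes of $R$ itself; strict inclusions are preserved because the contraction of $\mathfrak{p}R$ is $\mathfrak{p}$. Assuming $\mathfrak{P}_q$ survives in $R$ (otherwise $\mathfrak{P}_q R=R$ is not a maximal ideal at all), it follows that $\mathfrak{P}_q R$ is maximal in $R$ if and only if no prime of $\Int(\Z_{(p)})$ properly containing $\mathfrak{P}_q$ survives in $R$. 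Next I would use the description of the spectrum of $\Int(\Z_{(p)})$ given in Example \ref{ExamplelocalizzIntZ}, together with the characterization (\ref{0}), to identify these candidate over-primes: they are exactly the maximal unitary ideals $\mathfrak{M}_{p,\alpha}$ with $\alpha\in\Z_p$ and $q(\alpha)=0$. Finally, by the very definition of $Z_p(R)$ in (\ref{OmegaRp}), such an $\mathfrak{M}_{p,\alpha}$ survives in $R$ precisely when $\alpha\in Z_p(R)$, and concatenating the three equivalences yields the stated iff.

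I do not foresee any genuine obstacle; the only point that needs a brief check is that the surviving-prime/prime-of-$R$ correspondence actually preserves strict inclusions, but this is immediate from the identity $\mathfrak{p}R\cap \Int(\Z_{(p)})=\mathfrak{p}$ in Proposition \ref{overringPrufer}(i), which forces two distinct surviving primes to extend to distinct primes of $R$.
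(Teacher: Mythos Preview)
Your proposal is correct and follows essentially the same route as the paper: both arguments use Proposition~\ref{overringPrufer} to identify the primes of $R$ with the surviving primes of $\Int(\Z_{(p)})$, then observe that the only primes of $\Int(\Z_{(p)})$ properly containing $\mathfrak{P}_q$ are the $\mathfrak{M}_{p,\alpha}$ with $q(\alpha)=0$, and conclude via the definition of $Z_p(R)$. Your version is somewhat more explicit about why the correspondence preserves strict inclusions and about the caveat that $\mathfrak{P}_q$ must survive (which is automatic for polynomial overrings by Example~\ref{Q[X]}), but the substance is the same.
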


\begin{proof}
If $\mathfrak{P}_q$ does not become maximal in $R$ under extension, then $\mathfrak{P}_qR$ is  strictly contained is some prime ideal $\mathfrak{Q}$ of $R$. By Proposition~\ref{overringPrufer}, $\mathfrak{Q}$ must be equal to the extension of some prime ideal of $\Int(\Z_{(p)})$, which must be a maximal ideal $\mathfrak{M}_{p,\alpha}$ containing $\mathfrak{P}_q$, or equivalently, $V_{p,\alpha}\subset \Q[X]_{(q)}$. In particular, $\alpha\in Z_p(R)$ and $q(\alpha)=0$, by (\ref{BqMpalpha}). The converse is clear.
\end{proof}

\subsection{Polynomial overrings of $\Int\left(\Z_{(p)}\right)$  as intersections of valuation domains}
Now we consider different representations of a polynomial overring $R$ as intersections of valuation overrings of $\Int(\Z_{(p)})$.

\begin{Prop}\label{representations}
Let $p$ be a prime and let $R$ be any polynomial overring of $\Int(\Z_{(p)})$. We have the following representations of $R$ as an intersection of valuation overrings.
\begin{enumerate}
\item[$(i)$] The intersection of all the valuation overrings:
\be\label{allR} R=\bigcap_{q\,\in\,\Pirr}\Q[X]_{(q)}\;\cap\; \bigcap_{\alpha\,\in\, Z_p(R)}V_{p,\alpha}\ee
where $\Pirr$ denotes the set of irreducible polynomials of $\Z[X]$, and $Z_p(R)$ is defined by $Z_p(R)\doteqdot\{\alpha\in\Z_p\mid\mathfrak{M}_{p,\alpha}R\subsetneq R\}$. 
\item[$(ii)$] The intersection of all the minimal valuation overrings:
\begin{equation}\label{representationRp}
R=\bigcap_{q\,\in\,\Pirr^{Z_p(R)}}\Q[X]_{(q)}\;\cap\; \bigcap_{\alpha\,\in \, Z_p(R)}V_{p,\alpha}
\end{equation}
where $\Pirr^{Z_p(R)}$ denotes the subset of ${\mathcal P}_{\mathrm{irr}}$ formed by those polynomials which have no roots in $Z_p(R)$.
\item[$(iii)$] For every ${\mathcal P}\subseteq\Pirr$ and every $E\subseteq Z_p(R)$, the following intersection of valuation overrings of $R$:
\begin{equation}\label{necp}
R_{{\mathcal P}, E}=\bigcap_{q\,\in\,{\mathcal P}}\Q[X]_{(q)}\;\cap\; \bigcap_{\alpha\,\in\, E}V_{p,\alpha}\,
\end{equation}
is equal to $R$ if and only if ${\mathcal P}\supseteq \Pirr^{Z_p(R)}$ and $E$ is $p$-adically dense in $Z_p(R)$.
\end{enumerate}
\end{Prop}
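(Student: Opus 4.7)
My plan is to treat the three assertions in increasing difficulty. For (i), I would invoke Proposition~\ref{overringPrufer}: $R$ is a Pr\"ufer overring of $\Int(\Z_{(p)})$, so $R$ is the intersection of the localizations at the primes of $\Int(\Z_{(p)})$ that survive in $R$. Every $\mathfrak{P}_q$ survives since it already survives in $\Q[X]\supseteq R$ by Example~\ref{Q[X]}, contributing the factors $\Q[X]_{(q)}$; the surviving unitary maximal primes are, by the very definition of $Z_p(R)$, exactly the $\mathfrak{M}_{p,\alpha}$ with $\alpha\in Z_p(R)$, contributing the $V_{p,\alpha}$. For (ii), Corollary~\ref{nonunitaryRp} combined with (\ref{BqMpalpha}) shows that whenever $q\notin\Pirr^{Z_p(R)}$ the polynomial $q$ has some root $\alpha\in Z_p(R)$, so $V_{p,\alpha}\subsetneq\Q[X]_{(q)}$ and hence $\Q[X]_{(q)}$ is not a minimal valuation overring of $R$; Corollary~\ref{minimal} then makes it superfluous in every representation of $R$, so it may be discarded from (i).

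Turning to the sufficiency direction of (iii), the inclusion $R\subseteq R_{\mathcal{P},E}$ is automatic from Proposition~\ref{CharacterizOmegaRp} and Example~\ref{Q[X]}, since every factor contains $R$. For the reverse inclusion I would use (ii): the hypothesis $\mathcal{P}\supseteq\Pirr^{Z_p(R)}$ immediately gives $\bigcap_{q\in\mathcal{P}}\Q[X]_{(q)}\subseteq\bigcap_{q\in\Pirr^{Z_p(R)}}\Q[X]_{(q)}$, while density of $E$ in $Z_p(R)$ should give $\bigcap_{\beta\in E}V_{p,\beta}\subseteq\bigcap_{\alpha\in Z_p(R)}V_{p,\alpha}$. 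The key remark for this last inclusion is that, writing $\varphi=f/g\in\Q(X)$ in lowest terms, the set $\{\beta\in\Z_p:\varphi\in V_{p,\beta}\}$ is closed in $\Z_p$: if $\beta_n\to\beta$ with $\varphi(\beta_n)\in\Z_p$, then $g(\beta)=0$ would force $f(\beta)\neq 0$ by coprimality and hence $v_p(\varphi(\beta_n))\to-\infty$, contradicting $\varphi(\beta_n)\in\Z_p$; so $g(\beta)\neq 0$ and $\varphi(\beta)=\lim\varphi(\beta_n)\in\Z_p$.

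For necessity in (iii), assuming $R_{\mathcal{P},E}=R$, the condition $\mathcal{P}\supseteq\Pirr^{Z_p(R)}$ is the easier one: if some $q_0\in\Pirr^{Z_p(R)}\setminus\mathcal{P}$ existed, then by compactness of $Z_p(R)$ and the non-vanishing of $q_0$ on it, $M:=\max_{\alpha\in Z_p(R)}v_p(q_0(\alpha))$ would be finite, so $p^M/q_0$ would lie in every $V_{p,\alpha}$ with $\alpha\in Z_p(R)$ and in every $\Q[X]_{(q)}$ with $q\in\mathcal{P}$ (as $q\neq q_0$), placing it in $R_{\mathcal{P},E}=R$; but $R\subseteq\Q[X]_{(q_0)}$ by (ii), a contradiction. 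For the density condition I would first reduce to $\mathcal{P}=\Pirr$: the chain $R=R_{\Pirr,Z_p(R)}\subseteq R_{\Pirr,E}\subseteq R_{\mathcal{P},E}=R$ shows $R_{\Pirr,E}=R$, and using $\bigcap_{q\in\Pirr}\Q[X]_{(q)}=\Q[X]$ (intersection of all localizations of the PID $\Q[X]$ at its maximal ideals) this reads $\{f\in\Q[X]:f(E)\subseteq\Z_p\}=\{f\in\Q[X]:f(Z_p(R))\subseteq\Z_p\}$.

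The main obstacle of the whole proof lies in contradicting this last equality when $E$ fails to be dense in $Z_p(R)$: fix $\alpha\in Z_p(R)\setminus\overline{E}$ and $n\geq 1$ with $(\alpha+p^n\Z_p)\cap\overline{E}=\emptyset$; I need to exhibit $f\in\Q[X]$ with $f(\overline{E})\subseteq\Z_p$ and $f(\alpha)\notin\Z_p$. My plan is to appeal to Mahler's theorem: writing the characteristic function $\chi$ of $\alpha+p^n\Z_p$, a continuous map $\Z_p\to\{0,1\}$, as $\chi=\sum_{k\geq 0}c_k\binom{X}{k}$ with $c_k\in\Z_p$ and $v_p(c_k)\to\infty$, I truncate at a level $N$ large enough that the tail is in $p\Z_p$ on all of $\Z_p$, and replace each $c_k$ ($k\leq N$) by an integer $c_k'\in\Z$ with $c_k'\equiv c_k\pmod p$. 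The resulting $h:=\sum_{k\leq N}c_k'\binom{X}{k}\in\Int(\Z)$ then satisfies $h\equiv\chi\pmod p$ on $\Z_p$, so $h(\overline{E})\subseteq p\Z_p$ and $h(\alpha)\in 1+p\Z_p$; setting $f=h/p$ yields the required separating polynomial.
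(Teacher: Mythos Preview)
Your proof is correct and, for parts (i), (ii), and most of (iii), tracks the paper's argument closely: the same use of Proposition~\ref{overringPrufer} and Example~\ref{Q[X]} for (i), of Corollary~\ref{nonunitaryRp} for (ii), and the same $p^M/q_0$ construction (via compactness of the closed set $Z_p(R)$) for the necessity of $\mathcal P\supseteq\Pirr^{Z_p(R)}$ in (iii). Your continuity argument for the sufficiency direction of (iii) is slightly more explicit than the paper's (you handle potential poles of $\varphi=f/g$), but it is the same idea.

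The one genuine divergence is the last step, the necessity of density of $E$ in $Z_p(R)$. The paper reduces, as you do, to showing $\Int_\Q(E,\Z_p)\supsetneq\Int_\Q(Z_p(R),\Z_p)$ when $\overline{E}\subsetneq Z_p(R)$, but then simply invokes Proposition~\ref{topclospolclos} (the identification of polynomial closure with topological closure in $\Z_p$, quoted from \cite[Thm IV.1.15]{CaCh}). You instead give a direct construction: approximate the characteristic function of a ball $\alpha+p^n\Z_p$ disjoint from $\overline{E}$ by a polynomial $h\in\Int(\Z)$ via Mahler's theorem (truncation and reduction of coefficients mod $p$), and take $f=h/p$. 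This is exactly a self-contained proof of the relevant implication of Proposition~\ref{topclospolclos}. Your route trades an external citation for an explicit tool from $p$-adic analysis; it has the advantage of being self-contained and of making the separating polynomial completely concrete, while the paper's route keeps the argument short by packaging this step into a known result that it will reuse later anyway.
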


\begin{proof}
Example~\ref{Q[X]} and equivalences~(\ref{Z(R)}) show clearly that the valuation overrings of $R$ are exactly those which appear in the right-hand side of equality~(\ref{allR}). The equality follows from the fact that $R$ is an overring of a Pr\"ufer domain, and hence, it is a Pr\"ufer domain, equal to the intersection of all its valuation overrings. Thus, $(i)$ is proved.

The minimal valuation overrings of $R$ correspond to the valuation overrings whose center is a maximal ideal of $R$. Assertion $(ii)$ is then a consequence of Corollary~\ref{nonunitaryRp}.

By equality (\ref{allR}), $R$ is contained in any ring of the form $R_{{\mathcal P},\, E}.$ By continuity of the rational functions, if $\beta\in\Z_p$ is the limit of a sequence $\{\alpha_n\}_{n\geq 0}$ of elements of $E$, then $V_{p,\beta}\supset\bigcap_{n\,\in\, \N}V_{p,\alpha_n}\supset \bigcap_{\alpha\,\in\, E}V_{p,\alpha}$. As a consequence, if $E$ is dense in $Z_p(R)$, then $\bigcap_{\alpha\,\in\, E}V_{p,\alpha}= \bigcap_{\alpha\,\in\,Z_p(R)}V_{p,\alpha}$, and hence, once more by equality (\ref{representationRp}), $R_{{\mathcal P},\, E}=R.$ 

Let us prove now the converse assertion of $(iii)$. Assume first that ${\mathcal P}\not\supset \Pirr^{Z_p(R)}$. Then, there exists $r\in\Pirr\setminus{\mathcal P}$ without any root in $Z_p(R)$. Let $m=\sup\,\{v_p(r(\alpha))\mid \alpha\in Z_p(R)\}$. Since $Z_p(R)$ is closed, $m$ is finite since otherwise there would exists a sequence $\{\alpha_n\}_{n\geq 0}$ of elements of $Z_p(R)$ such that $v_p(r(\alpha_n))\geq n,$ and by compactness of $Z_p(R)$, there would exist a subsequence which converges to an element $\beta,$ which then would be a root of $r(X)$ in $Z_p(R)$. Consider now the rational function $\varphi(X)=\frac{p^m}{r(X)}$. For every  $\alpha\in Z_p(R)$,  $v_p(r(\alpha))\leq m$, and hence, $\varphi\in V_{p,\alpha}$. Consequently, $\varphi\in\bigcap_{q\in\mathcal{P}}\Q[X]_{(q)} \cap \,\bigcap_{\alpha\in Z_p(R)}V_{p,\alpha},$ while clearly $\varphi\notin \Q[X]_{(r)}$. Thus, $R\subsetneq R_{{\mathcal P}, E}$. 

Assume now that $E$ is not $p$-adically dense in $Z_p(R)$. It remains to prove that again we have a strict containment: $R\subsetneq R_{{\mathcal P}, E}$. For this, it is enough to prove that:
$$R\subsetneq \left(\bigcap_{q\,\in\,\Pirr}\Q[X]_{(q)}\right)\;\cap\; \left(\bigcap_{\alpha\,\in\, E}V_{p,\alpha}\right)=\{f(X)\in\Q[X]\mid f(E)\subseteq \Z_p\}.$$
This strict containment is a clear consequence of Proposition~\ref{topclospolclos} below. 
\end{proof}

\begin{Rem}\label{Remark}
By Remark \ref{PZpnonempty} and by the fact that $\Pirr^{\Z_p}\subseteq\Pirr^{Z_p(R)}$ for each overring $R$ of $\Int(\Z_{(p)})$, it follows that $\Pirr^{Z_p(R)}$ is always non-empty. Note though, that the complement of $\Pirr^{Z_p(R)}$ may be empty, for example if $Z_p(R)$ is formed by elements of $\Z_p$ which are transcendental over $\Q$.
\end{Rem}

As for $\Int(\Z)$ or for $\Int\left(\Z_{(p)}\right)$, an overring $R$ does not have in general an irredundant representation as intersection of valuation overrings. There does exist an irredundant representation in some particular cases, as the next result shows.

\begin{Cor}\label{irredundantlocal}
A polynomial overring $R$ of $\Int(\Z_{(p)})$ admits an irredundant representation if and only if $Z_p(R)$ contains a $p$-adically dense subset $E$ formed by isolated points.
\end{Cor}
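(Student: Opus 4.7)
The plan is to use Proposition \ref{representations} to parametrize every representation of $R$ as an intersection of valuation overrings, then inspect which factors can be dropped. By part (i) of that proposition, the valuation overrings of $R$ are exactly the rings $\Q[X]_{(q)}$ with $q\in\Pirr$ together with the rings $V_{p,\alpha}$ with $\alpha\in Z_p(R)$. Hence every representation of $R$ as an intersection of valuation overrings has the shape $R_{\mathcal{P},E}$ for some $\mathcal{P}\subseteq\Pirr$ and some $E\subseteq Z_p(R)$, and by part (iii) such an intersection equals $R$ precisely when $\mathcal{P}\supseteq\Pirr^{Z_p(R)}$ and $E$ is $p$-adically dense in $Z_p(R)$.

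Next, I would decide, for a given representation $R=R_{\mathcal{P},E}$, when each factor may be omitted. Applying the characterization in part (iii) to $R_{\mathcal{P}\setminus\{q\},E}$ shows immediately that $\Q[X]_{(q)}$ is superfluous iff $\mathcal{P}\setminus\{q\}\supseteq\Pirr^{Z_p(R)}$, i.e., iff $q\notin\Pirr^{Z_p(R)}$; similarly $V_{p,\alpha}$ is superfluous iff $E\setminus\{\alpha\}$ is still $p$-adically dense in $Z_p(R)$. The key topological observation is the following: using the standard identity $\overline{E}\cap U\subseteq \overline{E\cap U}$ for open $U\subseteq\Z_p$, density of $E$ in $Z_p(R)$ forces $E\setminus\{\alpha\}$ to be dense in $Z_p(R)$ exactly when $\alpha$ is a limit point of $E\setminus\{\alpha\}$, i.e., when $\alpha$ is \emph{not} isolated in $E$; and the same identity shows that any point of $E$ which is isolated in $E$ must actually be isolated in $Z_p(R)$ itself.

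Combining the two superfluousness criteria, an irredundant representation of $R$ exists iff one can choose $\mathcal{P}=\Pirr^{Z_p(R)}$ together with a $p$-adically dense $E\subseteq Z_p(R)$ each of whose points is isolated in $E$ (equivalently, in $Z_p(R)$). For the ``only if'' direction I would start from an arbitrary irredundant representation, argue it must be of the form $R_{\mathcal{P},E}$ by part (i), and then extract the two conditions from the non-superfluousness of each factor. For the ``if'' direction I would take the given $E$, set $\mathcal{P}=\Pirr^{Z_p(R)}$, invoke part (iii) to get $R_{\mathcal{P},E}=R$, and conclude irredundancy from the same two criteria.

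The only genuinely delicate step is the topological equivalence between ``$E\setminus\{\alpha\}$ fails to be $p$-adically dense in $Z_p(R)$'' and ``$\alpha$ is an isolated point of $E$'', and the implicit matching of ``isolated in $E$'' with ``isolated in $Z_p(R)$'' when $E$ is dense; everything else reduces to a direct application of Proposition \ref{representations}(iii).
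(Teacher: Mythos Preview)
Your argument is correct and follows essentially the same route as the paper's proof: both reduce the question to Proposition~\ref{representations}(iii), deduce that an irredundant representation forces $\mathcal{P}=\Pirr^{Z_p(R)}$ and $E$ dense in $Z_p(R)$, and then read off from (iii) that non-superfluousness of each $V_{p,\alpha}$ is equivalent to $\alpha$ being isolated in $E$. Your write-up is in fact more careful than the paper's on two points: you spell out why $\mathcal{P}$ must equal (not merely contain) $\Pirr^{Z_p(R)}$, and you justify the equivalence between ``isolated in $E$'' and ``isolated in $Z_p(R)$'' via the identity $\overline{E}\cap U\subseteq\overline{E\cap U}$ for open $U$, whereas the paper leaves the latter implicit in the statement.
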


\begin{proof}
Assume that $R_{\mathcal{P},E}$ is an irredundant representation of $R$. By Proposition~\ref{representations}$(iii)$, $\mathcal{P}={\mathcal P}_{\mathrm{irr}}^{Z_p(R)}$ and $E$ is dense in $Z_p(R)$. Moreover, for each $\alpha_0\in E$, $R=R_{\mathcal{P},E}\subsetneq R_{\mathcal{P},E\setminus\{\alpha_0\}}$, thus the topological closure of $E\setminus\{\alpha_0\}$ is strictly contained in that of $E$, which means that $\alpha_0$ is isolated in $E$. The reverse implication is obvious still by Proposition~\ref{representations}$(iii)$.
\end{proof}

For instance, we can consider $E$ to be equal to the set of distinct elements of a convergent sequence $\{\alpha_n\}_{n\geq 0}$ with limit $\alpha$, so that $Z_p(R)=E\cup\{\alpha\}$.

\subsection{Polynomial overrings of $\Int\left(\Z_{(p)}\right)$ as integer-valued polynomials rings}\label{polynomialclosure}

Contrarily to equality (\ref{representationRp}), equality (\ref{allR}) shows that a polynomial overring $R$ depends only on $Z_p(R)$. In order to describe how a polynomial overring $R$ of $\Int(\Z_{(p)})$ is characterized by its associated set $Z_p(R),$ we recall the following definition (for example, see \cite{PerFinite,PerWer}).

\begin{Def}\label{defint}
For every  subset $E$ of $\Z_p$, the ring formed by the polynomials of $\Q[X]$ whose values on $E$ are $p$-integers is denoted by:
$$\Int_{\Q}(E,\Z_p)\doteqdot\{f\in\Q[X]\mid f(E)\subset\Z_p\}.$$
In particular, for $E=\Z_p$, we set $\Int_{\Q}(\Z_p)\doteqdot\Int_{\Q}(\Z_p,\Z_p)$.
\end{Def}

By definition (or by convention) we set $\Int_{\Q}(\emptyset,\Z_p)=\Q[X]$ (after all, any polynomial is integer-valued over the empty-set). Note also that $\Int_{\Q}(E,\Z_p)=\Q[X]\cap\Int(E,\Z_p)$ where $\Int(E,\Z_p)=\{f\in\Q_p[X]\mid f(E)\subseteq\Z_p\}\,.$
The following equality follows from a continuity-density argument:
\begin{equation}\label{IntZp}
\Int(\Z_{(p)})=\Int_{\Q}(\Z_p)
\end{equation}

\begin{Prop}\label{RpIntval}
Let $R$ be a polynomial overring of $\Int(\Z_{(p)})$ and let $Z_p(R)=\{\alpha\in\Z_p\mid \mathfrak{M}_{p,\alpha}R\subsetneq R\}$. Then
$$R=\Int_{\Q}(Z_p(R),\Z_p).$$
\end{Prop}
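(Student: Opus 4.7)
The plan is to deduce the claim directly from the representation of $R$ established in Proposition~\ref{representations}$(i)$, namely
\[
R = \bigcap_{q\in\Pirr}\Q[X]_{(q)} \;\cap\; \bigcap_{\alpha\in Z_p(R)} V_{p,\alpha},
\]
combined with the fact from Example~\ref{Q[X]} that $\bigcap_{q\in\Pirr}\Q[X]_{(q)}=\Q[X]$. Thus $R = \Q[X]\cap \bigcap_{\alpha\in Z_p(R)}V_{p,\alpha}$, and it remains only to rewrite the right-hand side in terms of values at points of $Z_p(R)$.

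For the inclusion $R\subseteq \Int_{\Q}(Z_p(R),\Z_p)$, I would take $f\in R$. Since $R$ is a polynomial overring, $f\in\Q[X]$. For every $\alpha\in Z_p(R)$, Proposition~\ref{CharacterizOmegaRp} gives $R\subseteq V_{p,\alpha}$, which by definition of $V_{p,\alpha}$ means $f(\alpha)\in\Z_p$. Hence $f(Z_p(R))\subseteq\Z_p$, i.e.\ $f\in\Int_{\Q}(Z_p(R),\Z_p)$.

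For the reverse inclusion, I would take $f\in\Int_{\Q}(Z_p(R),\Z_p)$, so that $f\in\Q[X]$ and $f(\alpha)\in\Z_p$ for every $\alpha\in Z_p(R)$. The first condition places $f$ in $\bigcap_{q\in\Pirr}\Q[X]_{(q)}$ by Example~\ref{Q[X]}, and the second places $f$ in $V_{p,\alpha}$ for each $\alpha\in Z_p(R)$. The representation above then yields $f\in R$.

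There is no serious obstacle: the equality is essentially a repackaging of Proposition~\ref{representations}$(i)$ once one observes that intersecting the full family of valuation overrings $\Q[X]_{(q)}$ collapses to $\Q[X]$, and that the condition $f\in V_{p,\alpha}$ for a polynomial $f$ is literally $f(\alpha)\in\Z_p$. All the non-trivial content (in particular the closedness of $Z_p(R)$ and the identification of the correct family of valuation overrings containing $R$) has already been done upstream.
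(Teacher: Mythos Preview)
Your argument is correct and there is no circularity: Proposition~\ref{representations}$(i)$ is proved earlier using only Example~\ref{Q[X]}, Proposition~\ref{CharacterizOmegaRp}, and the general fact that a Pr\"ufer domain equals the intersection of its valuation overrings, so you may freely invoke it here.

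Your route differs from the paper's in the reverse inclusion. The paper does not appeal to the representation formula~(\ref{allR}); instead, it argues directly that every prime ideal of $\Int(\Z_{(p)})$ surviving in $R$ also survives in $\Int_{\Q}(Z_p(R),\Z_p)$ (the non-unitary $\mathfrak{P}_q$ survive because both rings sit inside $\Q[X]$, and the unitary $\mathfrak{M}_{p,\alpha}$ with $\alpha\in Z_p(R)$ survive by definition of $\Int_{\Q}(Z_p(R),\Z_p)$), and then concludes via Proposition~\ref{overringPrufer} that $\Int_{\Q}(Z_p(R),\Z_p)\subseteq R$. Your approach is more economical: since Proposition~\ref{representations}$(i)$ has already packaged exactly this surviving-prime information into the formula $R=\Q[X]\cap\bigcap_{\alpha\in Z_p(R)}V_{p,\alpha}$, you simply read off the equality with $\Int_{\Q}(Z_p(R),\Z_p)$. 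The paper's version is more self-contained at this point in the text, but yours makes better use of what has already been established.
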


\begin{proof}
The containment $R\subseteq\Int_{\Q}(Z_p(R),\Z_p)$ follows from Proposition  \ref{CharacterizOmegaRp}: $\alpha\in Z_p(R)$ if and only $R(\alpha)\subseteq\Z_p$. Thus, we have the chain of inclusions:
$$\Int(\Z_{(p)})\subseteq R\subseteq\Int_{\Q}(Z_p(R),\Z_p)\subseteq\Q[X]\,.$$
In order to prove the converse containment, it is sufficient to show that each prime ideal of $\Int(\Z_{(p)})$ which survives in $R$ also survives in $\Int_{\Q}(Z_p(R),\Z_p).$ In fact, since we are dealing with Pr\"ufer domains, if a prime ideal $\mathfrak{P}$ of $\Int(\Z_{(p)})$ is such that $\mathfrak{P}R\subsetneq R$, then $\mathfrak{P}R$ is a prime ideal of $R$ and these extensions comprise the  whole spectrum of $R$ by Proposition \ref{overringPrufer}, iv). We then use the well-known fact that an integral domain is equal to the intersection of the localizations at its own prime ideals.

For what we have already said, all the prime non-unitary ideals survive in both rings since they survive in $\Q[X]$. Let $\mathfrak{M}_{p,\alpha}$ be a maximal unitary ideal which survives in $R$. By definition of $Z_p(R)$, $\alpha\in  Z_p(R)$. Now, $\mathfrak{M}_{p,\alpha}$ survives in $\Int_{\Q}(Z_p(R),\Z_p)$ if and only if $\Int_{\Q}(Z_p(R),\Z_p)$ is contained in $V_{p,\alpha}$, that is, each polynomial of $\Int_{\Q}(Z_p(R),\Z_p)$ is integer-valued on $\alpha$. Since $\alpha\in Z_p(R)$, the conclusion follows.
\end{proof}

In particular, from Proposition \ref{RpIntval}, we have a complete characterization of the family $\mathcal{R}_p$ of polynomial overrings of $\Int(\Z_{(p)}):$ 
\begin{Cor}
If $\mathcal{F}(\Z_p)$ denote the family of closed subsets of $\Z_p,$ then
$$\mathcal{R}_p=\{\Int_{\Q}(F,\Z_p)\mid F\in\mathcal{F}(\Z_p)\}\,.$$
\end{Cor}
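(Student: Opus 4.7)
The statement is essentially a bookkeeping consequence of Propositions~\ref{CharacterizOmegaRp} and \ref{RpIntval}, so my plan is to prove the set equality by verifying the two inclusions separately, citing these prior results.

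For the forward inclusion $\mathcal{R}_p \subseteq \{\Int_{\Q}(F,\Z_p)\mid F\in\mathcal{F}(\Z_p)\}$, I take an arbitrary polynomial overring $R$ of $\Int(\Z_{(p)})$. Proposition~\ref{CharacterizOmegaRp} immediately tells me that the associated set $Z_p(R)$ is closed in $\Z_p$, hence belongs to $\mathcal{F}(\Z_p)$. Proposition~\ref{RpIntval} then identifies $R$ itself with $\Int_{\Q}(Z_p(R),\Z_p)$, putting $R$ on the right-hand side of the claimed equality with the explicit witness $F=Z_p(R)$.

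For the reverse inclusion, I start from an arbitrary closed $F\subseteq\Z_p$ and check that $\Int_{\Q}(F,\Z_p)$ is a polynomial overring of $\Int(\Z_{(p)})$. Using equality~(\ref{IntZp}), $\Int(\Z_{(p)})=\Int_{\Q}(\Z_p)\subseteq\Int_{\Q}(F,\Z_p)$ because every polynomial of $\Q[X]$ that sends $\Z_p$ into $\Z_p$ certainly sends its subset $F$ into $\Z_p$. The other side, $\Int_{\Q}(F,\Z_p)\subseteq\Q[X]=\Int_{\Q}(\emptyset,\Z_p)$, holds by definition of $\Int_{\Q}(F,\Z_p)$, so $\Int_{\Q}(F,\Z_p)$ sits between $\Int(\Z_{(p)})$ and $\Q[X]$, i.e., belongs to $\mathcal{R}_p$.

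I do not anticipate a genuine obstacle, since both inclusions are immediate from cited results and the basic definitions. The only subtlety worth flagging is that the corollary only asserts equality of sets, not that the map $F\mapsto\Int_{\Q}(F,\Z_p)$ is a bijection between $\mathcal{F}(\Z_p)$ and $\mathcal{R}_p$; verifying bijectivity would additionally require recovering $F$ as $Z_p(\Int_{\Q}(F,\Z_p))$ for every closed $F$, which hinges on separating points $\alpha\notin F$ from $F$ by a $\Q$-polynomial integer-valued on $F$ (the content of Proposition~\ref{topclospolclos} used earlier). Since the statement as written does not demand this, the proof reduces to the two short inclusions above.
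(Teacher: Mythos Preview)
Your proof is correct and matches the paper's approach: the paper states this corollary without proof, presenting it as an immediate consequence of Proposition~\ref{RpIntval} (together with the closedness of $Z_p(R)$ from Proposition~\ref{CharacterizOmegaRp}), which is exactly the content of your forward inclusion; your reverse inclusion is the routine verification that the paper leaves implicit. Your closing remark distinguishing set equality from bijectivity is apt and anticipates precisely what Theorem~\ref{containmentoverrings} does next.
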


Proposition \ref{RpIntval} says how $R$ is characterized by the closed subset $Z_p(R)\subseteq\Z_p$. In order to prove that for different closed subsets of $\Z_p$ we get different polynomial overrings of $\Int(\Z_{(p)})$, we recall the notion of polynomial closure introduced by Gilmer~\cite{gilmer} and McQuillan~\cite{McQ}.

\begin{Def}\label{polyclo}
For any subset $E\subseteq\Z_p$ ($E$ is not necessarily closed), the $p$-{\em polynomial closure} of $E$ is the largest subset $\overline{E}$ of $\Z_p$ (containing $E$) such that
$$\Int_{\Q}(E,\Z_p)=\Int_{\Q}(\overline{E},\Z_p)\,.$$
\end{Def}
Equivalently,
$$\overline{E}=\{\alpha\in\Z_p\mid \Int_{\Q}(E,\Z_p)(\alpha)\subset\Z_p\}=\{\alpha\in\Z_p\mid \Int_{\Q}(E,\Z_p)\subset V_{p,\alpha}\}=Z_p(\Int_{\Q}(E,\Z_p)),$$
where the last equality follows by Proposition \ref{CharacterizOmegaRp}.

\begin{Prop}\label{topclospolclos}
For any subset $E\subseteq\Z_p$,  the following subsets are equal:

$(i)$ the $p$-polynomial closure of $E,$

$(ii)$ the $p$-adic topological closure of $E,$

$(iii)$ $Z_p(\Int_{\Q}(E,\Z_p)).$
\end{Prop}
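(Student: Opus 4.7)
My plan is to observe first that $(i) = (iii)$ holds by the reformulation given right after Definition~\ref{polyclo}, which expresses the $p$-polynomial closure $\overline{E}$ as $\{\alpha\in\Z_p\mid \Int_{\Q}(E,\Z_p)\subset V_{p,\alpha}\}$; by Proposition~\ref{CharacterizOmegaRp} this set is exactly $Z_p(\Int_{\Q}(E,\Z_p))$. So the substantive content of the statement is the equality of the polynomial closure and the $p$-adic topological closure.

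The inclusion $(ii)\subseteq(i)$ is the easy direction and I would handle it by continuity. If $\alpha\in\Z_p$ is the $p$-adic limit of a sequence $(\alpha_n)_{n\geq 0}$ in $E$, then for every $f\in\Int_{\Q}(E,\Z_p)\subseteq\Q[X]$ the continuity of $f$ as a map $\Z_p\to\Q_p$, together with the fact that $\Z_p$ is closed in $\Q_p$, yields $f(\alpha)=\lim_n f(\alpha_n)\in\Z_p$. Hence $\alpha$ lies in the polynomial closure of $E$.

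For the reverse inclusion $(i)\subseteq(ii)$, I would argue by contrapositive: given $\alpha\in\Z_p$ not in the topological closure of $E$, I would produce an $f\in\Int_{\Q}(E,\Z_p)$ with $f(\alpha)\notin\Z_p$. Since $\Z$ is $p$-adically dense in $\Z_p$, I can choose $\alpha'\in\Z$ and $N\geq 1$ so that $\alpha\in U:=\alpha'+p^N\Z_p$ and $U\cap E=\emptyset$. The characteristic function $\chi_U\colon\Z_p\to\{0,1\}\subseteq\Z_p$ is locally constant, hence continuous. Using the classical fact that $\Int(\Z_{(p)})=\Int_{\Q}(\Z_p)$ is dense in the ring of continuous $\Z_p$-valued functions on $\Z_p$ for the uniform norm (Mahler's expansion applied to $\chi_U$, together with a $p$-adic approximation of the resulting Mahler coefficients from $\Z_p$ by elements of $\Z_{(p)}$), I can find $P\in\Int(\Z_{(p)})$ such that $|P(x)-\chi_U(x)|_p<1$ for every $x\in\Z_p$, i.e.\ $P(x)\equiv\chi_U(x)\pmod{p}$ uniformly in $x$. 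Setting $f(X):=P(X)/p\in\Q[X]$, I would then verify: for $x\in E\subseteq\Z_p\setminus U$ one has $P(x)\equiv 0\pmod p$, so $f(x)\in\Z_p$; whereas for $x=\alpha\in U$ one has $P(\alpha)\equiv 1\pmod p$, so $v_p(f(\alpha))=-1$. This $f$ is the desired separating polynomial, so $\alpha$ does not belong to the polynomial closure of $E$.

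The only non-routine ingredient in this plan is the uniform approximation of $\chi_U$ by a polynomial with $\Z_{(p)}$-coefficients; once such a $P$ is in hand, the rest is a direct verification of congruences modulo $p$. I would view this approximation step as the main obstacle, although it is essentially a standard consequence of Mahler's theorem combined with the density of $\Z$ in $\Z_p$.
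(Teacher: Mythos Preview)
Your argument is correct. Note that the paper itself does not prove this proposition: after observing (in the lines immediately preceding the statement) that $\overline E = Z_p(\Int_\Q(E,\Z_p))$ via Proposition~\ref{CharacterizOmegaRp} --- precisely your identification of $(i)$ with $(iii)$ --- it simply refers the reader to \cite[Thm~IV.1.15]{CaCh} for the equality of the polynomial closure and the topological closure. So your proposal supplies strictly more than the paper does: a self-contained proof of $(i)=(ii)$.

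The Mahler step is sound. Since $\chi_U\colon\Z_p\to\Z_p$ is continuous, its Mahler coefficients $a_n\in\Z_p$ tend to $0$; truncating at some $M$ with $v_p(a_n)\ge 1$ for all $n>M$ and replacing each remaining $a_n$ by any $b_n\in\Z$ with $b_n\equiv a_n\pmod p$ yields $P=\sum_{n\le M}b_n\binom{X}{n}\in\Int(\Z)\subseteq\Int(\Z_{(p)})$ satisfying $P(x)\equiv\chi_U(x)\pmod p$ for every $x\in\Z_p$. Then $f=P/p$ separates $\alpha$ from $E$ exactly as you describe. This route via uniform approximation is clean and conceptually transparent; the more bare-hands alternatives (explicit products of linear factors indexed by a residue system modulo $p^N$) tend to be messier to write out, which is presumably why the paper opts for a citation rather than a direct argument.
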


For the equivalence between the polynomial closure and the topological closure see for instance \cite[Thm IV.1.15]{CaCh}.
The next theorem shows that the closed subsets of $\Z_p$ are in one-to-one correspondence with the polynomial overrings of $\Int(\Z_{(p)})$.

\begin{Thm}\label{containmentoverrings}
Let $\mathcal{R}_p$ be the set of polynomial overrings of $\Int(\Z_{(p)})$ and let $\mathcal{F}(\Z_p)$ be the family of closed subsets of $\Z_p$. The following maps which reverse the containments are inverse to each other:
$$\varphi_p:\mathcal{R}_p\ni R\mapsto Z_p(R)\in\mathcal{F}(\Z_p)\quad\textrm{ and }\quad \psi_p:\mathcal{F}(\Z_p)\ni F\mapsto\Int_{\Q}(F,\Z_p)\in\mathcal{R}_p$$
\end{Thm}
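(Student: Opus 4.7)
The plan is to show that the theorem is essentially a repackaging of the two propositions immediately preceding it, together with the elementary observation that both maps reverse inclusions.

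First I would verify that the maps are well-defined, which is immediate: Proposition \ref{CharacterizOmegaRp} already established that $Z_p(R)$ is a closed subset of $\Z_p$ for every overring $R$, so $\varphi_p$ lands in $\mathcal{F}(\Z_p)$; conversely, $\Int_{\Q}(F,\Z_p)$ is clearly contained in $\Q[X]$ and contains $\Int_{\Q}(\Z_p,\Z_p) = \Int(\Z_{(p)})$ by the equality~(\ref{IntZp}), so $\psi_p$ lands in $\mathcal{R}_p$.

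Next I would establish the two composition identities. The identity $\psi_p \circ \varphi_p = \mathrm{id}_{\mathcal{R}_p}$ is precisely Proposition~\ref{RpIntval}: for any polynomial overring $R$, we have $R = \Int_{\Q}(Z_p(R),\Z_p)$. The identity $\varphi_p \circ \psi_p = \mathrm{id}_{\mathcal{F}(\Z_p)}$ follows from Proposition~\ref{topclospolclos}: for any $F \in \mathcal{F}(\Z_p)$, the set $Z_p(\Int_{\Q}(F,\Z_p))$ coincides with the $p$-adic topological closure of $F$, and since $F$ is already closed this equals $F$.

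Finally I would check that both maps reverse containments. If $R \subseteq R'$ in $\mathcal{R}_p$, then any $\alpha \in \Z_p$ with $R'(\alpha) \subseteq \Z_p$ certainly satisfies $R(\alpha) \subseteq \Z_p$, so by the equivalence~(\ref{Z(R)}) we have $Z_p(R') \subseteq Z_p(R)$. The reverse inclusion reversal is even more transparent: if $F \subseteq F'$ in $\mathcal{F}(\Z_p)$, then any polynomial of $\Q[X]$ mapping $F'$ into $\Z_p$ a fortiori maps $F$ into $\Z_p$, so $\Int_{\Q}(F',\Z_p) \subseteq \Int_{\Q}(F,\Z_p)$.

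There is no substantial obstacle here; the entire content of the theorem has been distributed across Propositions~\ref{RpIntval} and~\ref{topclospolclos}, so the proof is essentially bookkeeping. The only point worth emphasizing is the role of closedness in $\varphi_p \circ \psi_p = \mathrm{id}$: without the hypothesis $F \in \mathcal{F}(\Z_p)$, one would only recover the topological closure of $F$, which is exactly why the bijection lives on closed sets rather than arbitrary subsets of $\Z_p$.
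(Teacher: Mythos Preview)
Your proof is correct and follows essentially the same approach as the paper's own proof: both invoke Proposition~\ref{RpIntval} for $\psi_p\circ\varphi_p=\mathrm{id}_{\mathcal{R}_p}$ and Proposition~\ref{topclospolclos} (using that $F$ is closed) for $\varphi_p\circ\psi_p=\mathrm{id}_{\mathcal{F}(\Z_p)}$. Your version is slightly more detailed in that you also spell out well-definedness of the two maps and the inclusion-reversal, which the paper leaves implicit.
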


\begin{proof}
By  Proposition \ref{RpIntval}, $\psi_p\circ\varphi_p=id_{\mathcal{R}_p}$. Now we consider $\varphi_p\circ\psi_p:$ for every $F\in\mathcal{F}(\Z_p)$, one has $\varphi_p(\psi_p(F))=Z_p(\Int(F,\Z_p))=\{\alpha\in\Z_p\mid \forall f\in\Int(F,\Z_p)\;f(\alpha)\in\Z_p\}=F$ by Proposition~\ref{topclospolclos} since $F$ is assumed to be closed. Consequently, $\varphi_p\circ\psi_p=id_{\mathcal{F}(\Z_p)}$.
\end{proof}

We end this section with the characterization of minimal ring extensions of the family $\mathcal{R}_p.$ Recall that $R_1\subsetneq R_2\in\mathcal{R}_p$ forms a minimal ring extension if there is no ring in between $R_1$ and $R_2.$

\begin{Prop}\label{minimalextension}
Let $R=\Int_{\Q}(F,\Z_p)$ where $F=Z_p(R)$ is a closed subset of $\Z_p$. There is a bijection between the minimal ring extensions of $R$ in $\mathcal{R}_p$ and the subset $F_0$ formed by the isolated points of $F,$ which is given by:
$$F_0\ni\alpha\mapsto\Int_{\Q}(F\setminus\{\alpha\},\Z_p)$$
\end{Prop}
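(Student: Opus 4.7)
By Theorem~\ref{containmentoverrings}, the assignment $\psi_p\colon F\mapsto \Int_\Q(F,\Z_p)$ is an inclusion-reversing bijection between $\mathcal F(\Z_p)$ and $\mathcal R_p$. Consequently, the minimal ring extensions $R\subsetneq R'$ in $\mathcal R_p$ correspond bijectively to the \emph{maximal proper closed subsets} $G\subsetneq F$, via $G=\varphi_p(R')=Z_p(R')$. Hence the entire statement reduces to the following purely topological claim: the maximal proper closed subsets of $F$ (inside $\Z_p$) are exactly the subsets of the form $F\setminus\{\alpha\}$ with $\alpha\in F_0$, and the assignment $\alpha\mapsto F\setminus\{\alpha\}$ is a bijection between $F_0$ and this collection.

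First I would verify the easy direction. If $\alpha\in F_0$, then $\{\alpha\}$ is open in $F$, so $F\setminus\{\alpha\}$ is closed in $\Z_p$ and properly contained in $F$. Any closed subset $G$ with $F\setminus\{\alpha\}\subsetneq G\subseteq F$ must contain $\alpha$, forcing $G=F$. Thus $F\setminus\{\alpha\}$ is a maximal proper closed subset, and distinct isolated points obviously yield distinct subsets, so the map $\alpha\mapsto F\setminus\{\alpha\}$ is injective.

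Next I would prove surjectivity, which is the main point. Suppose $G\subsetneq F$ is a maximal proper closed subset, and pick any $\alpha\in F\setminus G$. Since $G$ is closed in the totally disconnected compact space $\Z_p$, which admits the clopen basis $\{\alpha+p^n\Z_p\}_{n\ge 0}$, there exists $n$ such that $(\alpha+p^n\Z_p)\cap G=\emptyset$. Then $G'\doteqdot F\setminus(\alpha+p^n\Z_p)$ is closed in $\Z_p$ (intersection of $F$ with the clopen complement), contains $G$, and is contained in $F$; since $\alpha\notin G'$, it is a proper subset. Maximality of $G$ forces $G'=G$, whence $F\setminus G=F\cap(\alpha+p^n\Z_p)$, which is a clopen neighbourhood of $\alpha$ in $F$.

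The remaining step is to show $F\setminus G=\{\alpha\}$, which then also proves that $\alpha$ is isolated in $F$. Assume for a contradiction that there is a second point $\beta\in F\setminus G$ with $\beta\neq\alpha$. By the Hausdorff property of $\Z_p$, choose disjoint clopen neighbourhoods $U,U'$ of $\alpha,\beta$. Then $G_1\doteqdot G\cup(F\cap U')$ is a union of two closed sets, hence closed, strictly contains $G$ (it contains $\beta$), and is strictly contained in $F$ (it misses $\alpha$), contradicting the maximality of $G$. Therefore $F\setminus G=\{\alpha\}$, and since $F\setminus G$ is open in $F$, $\alpha\in F_0$. This establishes surjectivity and completes the bijection. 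The main obstacle is precisely this last step: extracting from a maximal proper closed subset a single isolated point, which is where one uses compactness, total disconnectedness, and the Hausdorff separation of $\Z_p$.
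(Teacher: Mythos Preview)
Your proof is correct and follows the same overall strategy as the paper: use the inclusion-reversing bijection of Theorem~\ref{containmentoverrings} to reduce the question to characterizing the maximal proper closed subsets of $F$, and then argue topologically. The one noteworthy difference is in the surjectivity step. The paper's argument is shorter: given a maximal proper closed subset $E\subsetneq F$ and any $\alpha\in F\setminus E$, the set $E\cup\{\alpha\}$ is automatically closed (singletons are closed in the Hausdorff space $\Z_p$), so $E\subsetneq E\cup\{\alpha\}\subseteq F$ forces $F=E\cup\{\alpha\}$ by maximality, and then $E=F\setminus\{\alpha\}$ closed implies $\alpha$ is isolated. This uses only the $T_1$ property and avoids your two-stage argument with the clopen basis and the Hausdorff separation of a second point $\beta$. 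Your route works, but it invokes more of the structure of $\Z_p$ than is actually needed.
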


We stress that we are interested only in {\em polynomial} ring extensions of $R$, that is, elements of the family $\mathcal{R}_p$. Note that the proposition says that $R$ has no minimal ring extension in $\mathcal{R}_p$ if and only if $Z_p(R)$ has no isolated points.

\begin{proof}
Let $S\in{\mathcal R}_p$ be a proper extension of $R$. Then, by Theorem \ref{containmentoverrings}, $S=\Int_{\Q}(E,\Z_p)$ where $E=Z_p(S)$ is a closed subset strictly contained in $F$. For every $\alpha\in F\setminus E,$ the subset $E\cup\{\alpha\}$ is closed and the ring $T=\Int_{\Q}(E\cup\{\alpha\},\Z_p)$ satisfies $R\subseteq T\subsetneq S$ since $E\subsetneq E\cup\{\alpha\}\subseteq F.$ 

Therefore, the extension $R\subsetneq S$ is minimal if and only if there is no closed subset $G$ such that $E\subsetneq G\subsetneq F$. Consequently, if the extension $R\subsetneq S$ is minimal, then necessarily $F=E\cup\{\alpha\}$. The fact that $E$ is closed in $E\cup\{\alpha\}=F$ implies that $\alpha$ is isolated in $F$. Conversely, if $\alpha\in F$ is isolated in $F$, then $F\setminus\{\alpha\}$ is closed in $F$ and clearly there is no closed subset $G$ properly lying between $F\setminus\{\alpha\}$ and $F$. Thus, we may conclude that $S$ is a minimal extension of $R$ if and only if $S=\Int_{\Q}(F\setminus\{\alpha\},\Z_p)$ where $\alpha\in F$ is an isolated point.
If $\alpha\not=\alpha'$ are two distinct isolated points of $F$, then by Theorem \ref{containmentoverrings} the corresponding minimal ring extensions of $R$ are distinct, because $F\setminus\{\alpha\}\not= F\setminus\{\alpha'\}$.
\end{proof}

\section{Valuation overrings of an intersection of valuation domains}

The aim of this section is to characterize whether a valuation overring of $\Int(\Z)$ as described in section \ref{primespectrumIntZ} contains a given intersection of valuation overrings of $\Int(\Z)$. We will apply the obtained results to describe the representations of every polynomial overring of $\Int(\Z)$ as intersections of valuation domains. In order to do this, we will use extensively Proposition~\ref{GH}. To ease the notation, we set $V_q=\Q[X]_{(q)}$, for $q\in\Pirr$. Moreover, since now we are going to consider arbitrary intersections of unitary valuation domains for different $p\in\SP$, we generalize the notation $
R_{{\mathcal P},\, E_p}$ used in formula (\ref{necp}) in the following way: if $\mathcal{P}\subseteq\Pirr$ and if, for each $p\in\SP$, $E_p\subseteq\Z_p,$ then we set
$$R_{{\mathcal P},\, (E_p)_{p\in\SP}}=\bigcap_{q\,\in\,{\mathcal P}}V_q\;\cap\; \bigcap_{p\in\SP}\;\bigcap_{\alpha\,\in\, E_p}V_{p,\alpha}\,
$$
If the subset $E_p$ of $\Z_p$ is empty for some $p\in\SP,$ then the corresponding intersection $\bigcap_{\alpha\in E_p}V_{p,\alpha}$ is set to be equal to $\Q(X)$. We consider a similar convention for the set of non-unitary valuation overrings $V_q$ if $\mathcal{P}=\emptyset$. In particular, if $E_p$ is empty for all $p\in\SP$ except $p_0$, then the intersection corresponds to the ring $R_{\mathcal{P},E_{p_0}}.$ 

We want to determine which are the valuation overrings of a ring $R_{{\mathcal P},\, (E_p)_{p\in\SP}}$ as above. We distinguish the case of a unitary valuation overring $V_{p,\alpha}$ (whose center is a unitary prime ideal of $\Int(\Z)$) from a non-unitary valuation overring $V_q$ (whose center is non-unitary).

\subsection{Unitary valuation overrings}

We begin to determine unitary valuation overrings of an arbitary intersection of $V_{p,\alpha}$ for a fixed prime $p$, and  possibly some non-unitary valuation domains $V_q$'s. We remark first that, given a subset $E$ of $\Z_p$, if $V_{p_0,\alpha_0}$ is an overring of $\cap_{\alpha\in E}V_{p,\alpha}$, where $p_0\in\mathbb{P}$ and $\alpha_0\in\Z_{p_0}$, then $p_0=p$. In fact, if that were not true, then $\frac{1}{p_0}$, which is in $\cap_{\alpha\in E}V_{p,\alpha}$ would also belongs to $V_{p_0,\alpha_0}$, which is a contradiction. Therefore, we can just consider valuation overrings which lie above the same prime $p$. 

The next result is an obvious consequence of Proposition \ref{topclospolclos} (see also \cite[Lemma 26]{LopMore}; although Section 5 of \cite{LopMore} is entitled `Overrings of $\Int(\Z)$', the author's point of view is quite different from ours). 

\begin{Lem}\label{lemmalopmore}
Let $p\in\SP,$ $E\subseteq\Z_p,$ $\mathcal{P}\subseteq\Pirr$ and $\alpha_0\in\Z_{p}$. The following assertions are equivalent:
\begin{enumerate}
\item[$(i)$] $R_{{\mathcal P}, E}\subseteq V_{p,\alpha_0}\,,$
\item[$(ii)$] $\Int_{\Q}(E,\Z_p)\subseteq V_{p,\alpha_0}\,,$
\item[$(iii)$] $\alpha_0$ belongs to the topological closure $\overline{E}$ of $E$ in $\Z_p$.
\end{enumerate}
In particular, $R_{{\mathcal P}, E}=R_{{\mathcal P}, \overline E}$ and $Z_p(R_{{\mathcal P}, E})=\overline{E}$.
\end{Lem}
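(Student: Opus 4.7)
The plan is to prove the cycle $(i)\Rightarrow(ii)\Rightarrow(iii)\Rightarrow(i)$, after which the ``In particular'' clauses fall out as formal consequences. Nothing in the argument looks genuinely hard: the lemma essentially repackages Proposition~\ref{topclospolclos} (polynomial closure equals topological closure in $\Z_p$) together with the continuity-of-rational-functions observation already used in the proof of Proposition~\ref{representations}.

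For $(i)\Rightarrow(ii)$, every $V_q=\Q[X]_{(q)}$ is a localization of $\Q[X]$, so $\Q[X]\subseteq\bigcap_{q\in\mathcal{P}}V_q$; intersecting with $\bigcap_{\alpha\in E}V_{p,\alpha}$ yields $\Int_{\Q}(E,\Z_p)\subseteq R_{\mathcal{P},E}$, and the desired inclusion follows by transitivity. For $(ii)\Rightarrow(iii)$, I would rewrite (ii) via the characterization of polynomial closure recalled just after Definition~\ref{polyclo}, namely $\overline{E}=\{\alpha\in\Z_p\mid \Int_{\Q}(E,\Z_p)\subseteq V_{p,\alpha}\}$: then (ii) literally says that $\alpha_0$ belongs to the polynomial closure of $E$, which Proposition~\ref{topclospolclos} identifies with the topological closure. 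For $(iii)\Rightarrow(i)$, I would choose a sequence $(\alpha_n)_{n\geq 0}\subseteq E$ with $\alpha_n\to\alpha_0$ in $\Z_p$ and reuse the continuity-density argument already employed in Proposition~\ref{representations}: any $\varphi\in\bigcap_n V_{p,\alpha_n}$ satisfies $\varphi(\alpha_n)\in\Z_p$ for every $n$, and $p$-adic continuity then gives $\varphi(\alpha_0)\in\Z_p$, yielding
\[
V_{p,\alpha_0}\;\supseteq\;\bigcap_{n\in\N}V_{p,\alpha_n}\;\supseteq\;\bigcap_{\alpha\in E}V_{p,\alpha}\;\supseteq\;R_{\mathcal{P},E}.
\]

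The ``In particular'' statements are then immediate. The inclusion $R_{\mathcal{P},\overline{E}}\subseteq R_{\mathcal{P},E}$ comes from $E\subseteq\overline{E}$, and the reverse from applying $(iii)\Rightarrow(i)$ pointwise to every $\alpha_0\in\overline{E}$. The identity $Z_p(R_{\mathcal{P},E})=\overline{E}$ is then Proposition~\ref{CharacterizOmegaRp} combined with the established equivalence $(i)\Leftrightarrow(iii)$.

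The step deserving the most care is the continuity claim inside $(iii)\Rightarrow(i)$: writing $\varphi=f/g$ in lowest terms in $\Q[X]$, one has to rule out a pole at $\alpha_0$ (i.e.\ $g(\alpha_0)=0$). Coprimality forces $f(\alpha_0)\neq 0$ in that case, which would give $v_p(\varphi(\alpha_n))\to-\infty$ and contradict $\varphi(\alpha_n)\in\Z_p$. Once this mild subtlety is handled, the rest of the proof is purely formal.
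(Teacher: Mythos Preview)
Your argument is correct. The implications $(i)\Rightarrow(ii)$ and $(ii)\Rightarrow(iii)$ match the paper's proof verbatim, and your handling of the ``In particular'' clauses is fine.

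The one genuine divergence is in how you close the cycle. You go $(iii)\Rightarrow(i)$ by the elementary continuity argument (pick $\alpha_n\to\alpha_0$ in $E$, use that $\varphi(\alpha_n)\in\Z_p$ forces $\varphi(\alpha_0)\in\Z_p$, ruling out a pole via coprimality). The paper instead proves $(ii)\Rightarrow(i)$ using the Gilmer--Heinzer criterion (Proposition~\ref{GH}): starting from $\Int_{\Q}(E,\Z_p)\subseteq V_{p,\alpha_0}$, it takes any finitely generated $I\subseteq\mathfrak{M}_{p,\alpha_0}$, enlarges to $J=I+(p)$ to force the ideal into a unitary maximal ideal, and invokes the representation~(\ref{representationRp}) to land $J$ in some $\mathfrak{M}_{p,\alpha}$; this yields $V_{p,\alpha_0}\supseteq\bigcap_{\alpha\in E}V_{p,\alpha}\supseteq R_{\mathcal{P},E}$. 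Your route is more self-contained here---it is literally the continuity step already used inside the proof of Proposition~\ref{representations}, so nothing new is needed---whereas the paper's route illustrates the GH machinery that becomes indispensable later (Lemma~4.2, Theorem~\ref{vqunder}), where no simple limit argument is available. Both approaches are sound; yours is the lighter one for this particular lemma.
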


\begin{proof}
$(i)\rightarrow (ii)$: $\Int_{\Q}(E,\Z_p)$ is contained in $R_{{\mathcal P}, E}$. 

$(ii)\leftrightarrow (iii)$: $(ii)$ means that, for every $f\in  \Int_{\Q}(E,\Z_p)$, $f(\alpha_0)\in\Z_p$, that is, $\alpha_0$ belongs to the $p$-polynomial closure of $E,$ thus we may conclude with Proposition~\ref{topclospolclos}. 

$(ii)\rightarrow (i)$:
Assume that  $V_{p,\alpha_0}$ is an overring of $R=\Int_{\Q}(E,\Z_p)$. 
We use Proposition~\ref{GH} to get the claim. Let $I\subset R$ be a finitely generated ideal contained in $\mathfrak{M}_{p,\alpha_0}$ and let $J=I+(p)$. Since $J$ is not contained in any non-unitary prime ideal of $R$, then it follows from (\ref{representationRp}) that $J$ is contained in some unitary prime ideal $\mathfrak{M}_{p,\alpha}$ of $R$ where $\alpha\in E$. In particular, $I$ is contained in this ideal $\mathfrak{M}_{p,\alpha}$ and we conclude that $V_{p,\alpha_0}\supseteq\bigcap_{\alpha\in E}V_{p,\alpha}\supseteq R_{{\mathcal P}, E}$.

The last claims follow immediately.
\end{proof}

\begin{Lem}
For each $p\in\SP$, let $E_p\subseteq\Z_p.$ Let $p_0\in\SP$ and $\alpha_0\in\Z_{p_0}$. Then 
$$\bigcap_{p\in\mathbb{P}}\bigcap_{\alpha\in E_p}V_{p,\alpha}\subset V_{p_0,\alpha_0}\Leftrightarrow \bigcap_{\alpha\in E_{p_0}}V_{p_0,\alpha}\subset V_{p_0,\alpha_0}
$$
\end{Lem}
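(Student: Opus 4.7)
The reverse implication is immediate: since $E_{p_0}\subseteq\bigcup_{p\in\SP}E_p$ in the index set, we have $\bigcap_{p}\bigcap_{\alpha\in E_p}V_{p,\alpha}\subseteq\bigcap_{\alpha\in E_{p_0}}V_{p_0,\alpha}$, so any containment involving the latter carries over to the former.

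For the forward implication the plan is to apply the Gilmer--Heinzer criterion (Proposition~\ref{GH}) to the Pr\"ufer domain $\Int(\Z)$, using as primes the centers $\mm_{p,\alpha}=\mathfrak{M}_{p,\alpha}$ of the valuation overrings $V_{p,\alpha}=\Int(\Z)_{\mathfrak{M}_{p,\alpha}}$. By hypothesis and Proposition~\ref{GH}, every finitely generated ideal $\II\subseteq\mathfrak{M}_{p_0,\alpha_0}$ is contained in some $\mathfrak{M}_{p,\alpha}$ with $p\in\SP$ and $\alpha\in E_p$. The goal is to show that we can always take $p=p_0$, and then appeal to Proposition~\ref{GH} again to conclude $\bigcap_{\alpha\in E_{p_0}}V_{p_0,\alpha}\subseteq V_{p_0,\alpha_0}$.

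The key trick is to enlarge $\II$ to $\mathfrak{J}=\II+(p_0)$, which is still finitely generated and, since $p_0\in\mathfrak{M}_{p_0,\alpha_0}$, still contained in $\mathfrak{M}_{p_0,\alpha_0}$. Applying the hypothesis to $\mathfrak{J}$ produces some prime $\mathfrak{M}_{p,\alpha}$ (with $\alpha\in E_p$) containing $\mathfrak{J}$. Then $p_0\in\mathfrak{M}_{p,\alpha}\cap\Z=p\Z$, which forces $p=p_0$; hence $\II\subseteq\mathfrak{J}\subseteq\mathfrak{M}_{p_0,\alpha}$ with $\alpha\in E_{p_0}$. This is exactly the criterion of Proposition~\ref{GH} for $\bigcap_{\alpha\in E_{p_0}}V_{p_0,\alpha}\subseteq V_{p_0,\alpha_0}$.

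There is no real obstacle here; the only point requiring care is that Proposition~\ref{GH} is about families of primes of a fixed Pr\"ufer domain, so we must start inside $\Int(\Z)$ with the primes $\mathfrak{M}_{p,\alpha}$ rather than inside some overring. The argument then works uniformly because the adjunction of $p_0$ kills every prime $\mathfrak{M}_{p,\alpha}$ with $p\ne p_0$, effectively collapsing the whole family to the $p_0$-layer.
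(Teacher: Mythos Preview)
Your proof is correct and follows essentially the same approach as the paper: both argue the nontrivial direction via Proposition~\ref{GH}, enlarging a finitely generated ideal $\II\subseteq\mathfrak{M}_{p_0,\alpha_0}$ to $\II+(p_0)$ so that any $\mathfrak{M}_{p,\alpha}$ containing it must have $p=p_0$. The only difference is expository: you spell out why $p_0\in\mathfrak{M}_{p,\alpha}$ forces $p=p_0$ (via $\mathfrak{M}_{p,\alpha}\cap\Z=p\Z$), whereas the paper simply asserts $J\not\subseteq\mathfrak{M}_{p,\alpha}$ for $p\neq p_0$.
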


\begin{proof}
One implication is obvious. Conversely, assume that $V_{p_0,\alpha_0}$ is an overring of the intersection $\bigcap_{p\in\mathbb{P}}\bigcap_{\alpha\in E_p}V_{p,\alpha}$. Let $I$ be a finitely generated ideal contained in $\mathfrak{M}_{p_0,\alpha_0}$ and let $J=I+(p_0)$. Since for all $p\not=p_0$ and for all $\alpha\in E_p,$ we have $J\not\subset\mathfrak{M}_{p,\alpha},$ it follows that $J\subseteq\mathfrak{M}_{p_0,\alpha}$ for some $\alpha\in E_{p_0}$. In particular, $I\subseteq\mathfrak{M}_{p_0,\alpha}$. By Proposition~\ref{GH}, we may conclude.
\end{proof}

Both previous lemmas lead to the following proposition.

\begin{Prop}\label{unitaryvaloverrings}
For each $p\in\SP,$ let $E_p\subseteq\Z_p$. Let $p_0\in \SP,$ let $\alpha_0\in\Z_p,$ and let $\mathcal{P}$ be any subset of $\Pirr.$ Then the following conditions are equivalent:
\begin{itemize}
\item[$(i)$] $\bigcap_{p\in\mathbb{P}}\bigcap_{\alpha\in E_p}V_{p,\alpha}\subset V_{p_0,\alpha_0}$.
\item[$(i')$] $R_{{\mathcal P},\, (E_p)_{p\in\SP}}\subset V_{p_0,\alpha_0}$.
\item[$(ii)$] $\bigcap_{\alpha\in E_{p_0}}V_{p_0,\alpha}\subset V_{p_0,\alpha_0}$.
\item[$(ii')$] $R_{\mathcal{P},E_{p_0}}\subset V_{p_0,\alpha_0}$.
\item[$(iii)$] $\alpha_0$ is in the topological closure of $E_{p_0}$ in $\Z_{p_0}.$ 
\end{itemize}
\end{Prop}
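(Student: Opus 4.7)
The plan is to combine the two preceding lemmas and then bootstrap them to handle the fifth condition $(i')$, which is the only new ingredient in the statement.

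First I would observe that the two previous lemmas in the paper already yield most of the equivalences for free. The unnamed lemma just above the proposition (the one with $\bigcap_{p\in\mathbb{P}}\bigcap_{\alpha\in E_p}V_{p,\alpha}\subset V_{p_0,\alpha_0}\Leftrightarrow\bigcap_{\alpha\in E_{p_0}}V_{p_0,\alpha}\subset V_{p_0,\alpha_0}$) gives the equivalence $(i)\Leftrightarrow(ii)$ directly. Then Lemma~\ref{lemmalopmore}, applied with $p=p_0$ and $E=E_{p_0}$, supplies the chain $(ii)\Leftrightarrow(ii')\Leftrightarrow(iii)$. So at this stage $(i)$, $(ii)$, $(ii')$, $(iii)$ are all equivalent, and only $(i')$ still needs to be inserted into the loop.

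The two trivial directions are $(i)\Rightarrow(i')$ and $(ii')\Rightarrow(i')$: both hold just because $R_{{\mathcal P},\,(E_p)_{p\in\SP}}$ is contained in the larger rings $\bigcap_p\bigcap_{\alpha\in E_p}V_{p,\alpha}$ and $R_{\mathcal{P},E_{p_0}}$, so the containment in $V_{p_0,\alpha_0}$ passes automatically to the smaller ring. The substantive step is therefore to close the loop by showing, say, $(i')\Rightarrow(ii)$.

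For that implication I would reuse the $J=I+(p_0)$ trick that drives the two preceding lemmas. Assume $(i')$ and pick a finitely generated ideal $I$ of $\Int(\Z)$ contained in $\mathfrak{M}_{p_0,\alpha_0}$. Set $J=I+(p_0)$. Since $p_0\in J$, the ideal $J$ cannot be contained in any non-unitary prime $\mathfrak{P}_q$ (those contain no nonzero constants) nor in any unitary prime $\mathfrak{M}_{p,\alpha}$ with $p\neq p_0$. Applying Proposition~\ref{GH} to the representation $R_{{\mathcal P},\,(E_p)_{p\in\SP}}\subset V_{p_0,\alpha_0}$ forces $J$ to sit inside some prime ideal from the defining family; by the elimination just made this must be $\mathfrak{M}_{p_0,\alpha}$ for some $\alpha\in E_{p_0}$. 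In particular $I\subseteq\mathfrak{M}_{p_0,\alpha}$, and a second application of Proposition~\ref{GH} (now to the representation $\bigcap_{\alpha\in E_{p_0}}V_{p_0,\alpha}$) yields $(ii)$.

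I do not anticipate a serious obstacle: the only thing to be a bit careful about is the book-keeping of the defining families in Proposition~\ref{GH}, namely making sure that in each application the primes one is excluding really are all the primes used in the representation. The case conventions at the start of Section~4 (where empty $E_p$'s or empty $\mathcal P$ yield the ambient field and so contribute nothing) mean no further case analysis is needed.
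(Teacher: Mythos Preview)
Your proposal is correct and matches the paper's approach: the paper simply records that ``both previous lemmas lead to the following proposition'' without spelling anything out, and the one implication it leaves implicit, namely $(i')\Rightarrow(ii)$, is exactly the $J=I+(p_0)$ argument you wrote down (which is just the proof of the unnamed lemma repeated verbatim, now also discarding the non-unitary primes $\mathfrak{P}_q$ since $p_0\notin\mathfrak{P}_q$). So you have faithfully expanded what the paper compresses into one sentence.
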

\medskip
\begin{Cor}\label{superfluousVpa}
Let $\mathcal{P}\subseteq\Pirr$ and, for each $p\in\SP$, let $E_p\subseteq\Z_p$. Then, $V_{p_0,\alpha_0}$ where $p_0\in\SP$ and $\alpha_0\in E_{p_0}$ is not a superfluous valuation overring of $R_{{\mathcal P},\, (E_p)_{p\in\SP}}$ if and only if $\alpha_0$ is an isolated point of $E_{p_0}$.
\end{Cor}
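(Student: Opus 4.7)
The plan is to reduce the statement to a single application of Proposition~\ref{unitaryvaloverrings}, by comparing the original intersection $R=R_{{\mathcal P},\,(E_p)_{p\in\SP}}$ to the intersection obtained by removing the single factor $V_{p_0,\alpha_0}$. Set $E'_p=E_p$ for $p\neq p_0$ and $E'_{p_0}=E_{p_0}\setminus\{\alpha_0\}$, and write $R'=R_{{\mathcal P},\,(E'_p)_{p\in\SP}}$ for that reduced intersection. By the very definition recalled after equation~(\ref{IntZ}), the valuation domain $V_{p_0,\alpha_0}$ is superfluous in the representation of $R$ precisely when $R=R'$. Since trivially $R=R'\cap V_{p_0,\alpha_0}$, this amounts to $R'\subseteq V_{p_0,\alpha_0}$.

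Now I would apply Proposition~\ref{unitaryvaloverrings} to the family $(E'_p)_{p\in\SP}$: the equivalence $(i')\Leftrightarrow(iii)$ there says that $R'\subseteq V_{p_0,\alpha_0}$ if and only if $\alpha_0$ lies in the $p_0$-adic topological closure of $E'_{p_0}=E_{p_0}\setminus\{\alpha_0\}$ in $\Z_{p_0}$. Combining the two equivalences, $V_{p_0,\alpha_0}$ is superfluous in the representation of $R$ if and only if $\alpha_0\in\overline{E_{p_0}\setminus\{\alpha_0\}}$.

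Finally, since $\alpha_0\in E_{p_0}$, the condition $\alpha_0\notin\overline{E_{p_0}\setminus\{\alpha_0\}}$ is exactly the statement that $\alpha_0$ is an isolated point of $E_{p_0}$ in the $p_0$-adic topology. Negating the previous equivalence yields the corollary: $V_{p_0,\alpha_0}$ is \emph{not} superfluous in $R_{{\mathcal P},\,(E_p)_{p\in\SP}}$ if and only if $\alpha_0$ is isolated in $E_{p_0}$. There is no real obstacle here since everything follows formally from Proposition~\ref{unitaryvaloverrings}; the only thing to be careful about is the bookkeeping between ``superfluous'' (which means the intersection is unchanged upon removal) and the containment $R'\subseteq V_{p_0,\alpha_0}$, and the observation that the non-unitary factors $V_q$ and the factors $V_{p,\alpha}$ with $p\neq p_0$ play no role thanks to the equivalence $(i')\Leftrightarrow(iii)$, which does not depend on $\mathcal{P}$ or on the $E_p$ for $p\neq p_0$.
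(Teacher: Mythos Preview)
Your proof is correct and follows essentially the same approach as the paper's own proof: both reduce the question of superfluousness to whether the intersection with $V_{p_0,\alpha_0}$ removed is contained in $V_{p_0,\alpha_0}$, then invoke Proposition~\ref{unitaryvaloverrings} to translate this into the topological condition $\alpha_0\in\overline{E_{p_0}\setminus\{\alpha_0\}}$. Your write-up is slightly more explicit about the bookkeeping (defining $R'$ and noting $R=R'\cap V_{p_0,\alpha_0}$), but the argument is the same.
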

\begin{proof}
$V_{p_0,\alpha_0}$ is not a superfluous valuation overring of $R_{{\mathcal P},\, (E_p)_{p\in\SP}}$ if and only if the intersection of the valuation domains of the family $\{V_q | q\in\mathcal{P}\}\cup\{V_{p,\alpha}|\alpha\in E_p,p\in\SP\}\setminus\{V_{p_0,\alpha_0}\}$ is not contained in $V_{p_0,\alpha_0}$. By Proposition \ref{unitaryvaloverrings}, this condition is equivalent to $\alpha_0\notin \overline{E_{p_0}\setminus \{\alpha_0\}}$, that is, $\alpha_0$ is an isolated point of $E_{p_0}$. 
\end{proof}

\subsection{Non-unitary valuation overrings}

Now we consider the case of a non-unitary valuation domain $V_q=\Q[X]_{(q)}$, $q\in\Pirr$, containing an arbitrary intersection of unitary and non-unitary valuation domains.

\begin{Lem}\label{Vq0overringVq}
Let $\mathcal{P}\subset\Pirr$ and $q_0\in\Pirr$. Then
$$\bigcap_{q\in\mathcal{P}}V_{q}\subseteq V_{q_0}\Leftrightarrow q_0\in\mathcal{P}$$
\end{Lem}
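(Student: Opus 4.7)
The reverse implication is trivial: if $q_0 \in \mathcal{P}$, then $V_{q_0}$ appears as a factor of the intersection, so $\bigcap_{q \in \mathcal{P}} V_q \subseteq V_{q_0}$.

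For the forward implication, I plan to prove the contrapositive by exhibiting an explicit witness. Suppose $q_0 \notin \mathcal{P}$; I will show that the rational function $\varphi(X) = 1/q_0(X)$ lies in $\bigcap_{q \in \mathcal{P}} V_q$ but not in $V_{q_0}$. That $\varphi \notin V_{q_0} = \Q[X]_{(q_0)}$ is immediate since $q_0$ divides the denominator of $\varphi$. For the membership $\varphi \in V_q$ for each $q \in \mathcal{P}$, I need that $q \nmid q_0$ in $\Q[X]$. Here I will use that elements of $\Pirr$ are primitive irreducible polynomials of positive degree in $\Z[X]$ (constant primes $p \in \Z$ would give $p\Q[X] = \Q[X]$, not a prime ideal of $\Int(\Z)$), hence irreducible in $\Q[X]$ by Gauss's lemma, and that distinct elements of $\Pirr$ are non-associate in $\Q[X]$ by the uniqueness statement in Section~\ref{primespectrumIntZ} (`the polynomial $q$ is uniquely determined'). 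Consequently, for $q \in \mathcal{P}$ with $q \neq q_0$, the irreducibles $q$ and $q_0$ are non-associate, hence coprime in $\Q[X]$, and $1/q_0 \in \Q[X]_{(q)} = V_q$.

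An alternative route, slightly less elementary but in the spirit of the rest of the section, is to apply Proposition~\ref{GH} to $D = \Int(\Z)$ with $\pp = \mathfrak{P}_{q_0}$ and $\{\pp_\alpha\} = \{\mathfrak{P}_q\}_{q \in \mathcal{P}}$: the containment $V_{q_0} \supseteq \bigcap_{q \in \mathcal{P}} V_q$ is equivalent to the condition that every finitely generated ideal $\II \subseteq \mathfrak{P}_{q_0}$ is contained in some $\mathfrak{P}_q$ with $q \in \mathcal{P}$. Taking $\II = (q_0) \subseteq \mathfrak{P}_{q_0}$, the above non-associateness shows $q_0 \notin \mathfrak{P}_q$ for any $q \in \mathcal{P}$, so the criterion fails.

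I do not anticipate any real obstacle; the only subtle point is justifying that distinct polynomials of $\Pirr$ correspond to distinct valuation domains $V_q$, which is precisely the uniqueness built into the parameterization of the non-unitary primes of $\Int(\Z)$.
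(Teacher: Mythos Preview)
Your proof is correct and follows exactly the paper's approach: the paper also uses the witness $\varphi=1/q_0$, noting that $\frac{1}{q_0}\in\bigcap_{q\in\mathcal{P}}V_q$ while $\frac{1}{q_0}\notin V_{q_0}$. You supply more justification for the coprimality of distinct elements of $\Pirr$ than the paper does, and your alternative via Proposition~\ref{GH} is a valid extra observation, but the core argument is identical.
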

\begin{proof}
One direction is obvious. Conversely, suppose $V_{q_0}$ is an overring of the intersection of the $V_q$'s, $q\in\mathcal{P}$. If $q_0\notin\mathcal{P}$, we have a contradiction since 
$$\frac{1}{q_0}\in\bigcap_{q\in\mathcal{P}}V_{q}\;\textrm{ and }\;\frac{1}{q_0}\notin V_{q_0}\,.$$
\end{proof}

\begin{Thm}\label{vqunder}
Let $q\in\Pirr$ and for each $p\in\mathbb{P}$, let $F_p\subseteq\Z_p$ be a $($possibly empty$)$ closed set of $p$-adic integers. Then
$$\bigcap_{p\in\mathbb{P}}\bigcap_{\alpha\in F_p}V_{p,\alpha}\subset V_q\Leftrightarrow\; \exists\; (\alpha_p)\in\prod_{p\in\mathbb{P}}F_p\textnormal{ such that }\sum_{p\in\mathbb{P}}v_p(q(\alpha_p))=+\infty$$
\end{Thm}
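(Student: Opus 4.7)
My plan is to prove each direction by a direct argument on rational functions of the form $a/q(X)$.

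For $(\Leftarrow)$, I would take any $\varphi\in\bigcap_{p,\alpha}V_{p,\alpha}$ and suppose for contradiction that $\varphi\notin V_q$. Write $\varphi=N/M$ in lowest terms with $N,M\in\Z[X]$; then $v_q(\varphi)<0$ forces $q\mid M$ in $\Z[X]$, so $M=q^kT$ with $k\geq 1$ and $\gcd(N,q)=1$ in $\Q[X]$. A Bezout identity in $\Q[X]$, cleared of denominators, produces $A,B\in\Z[X]$ and a nonzero $c\in\Z$ with $AN+Bq=c$. For each $p$, the assumption $\varphi\in V_{p,\alpha_p}$ forces $q(\alpha_p)\neq 0$ (else $\varphi$ has a genuine pole at $\alpha_p$, since $\gcd(N,M)=1$ ensures $N(\alpha_p)\neq 0$), and it also yields $v_p(N(\alpha_p))\geq k\,v_p(q(\alpha_p))\geq v_p(q(\alpha_p))$. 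The ultrametric inequality applied to $AN+Bq=c$ at $\alpha_p$, using that $A(\alpha_p),B(\alpha_p)\in\Z_p$, then gives
$$v_p(c)\;\geq\;\min\bigl\{v_p(N(\alpha_p)),\,v_p(q(\alpha_p))\bigr\}=v_p(q(\alpha_p)).$$
Summing over $p\in\SP$ would yield the absurdity $+\infty>\sum_p v_p(c)\geq\sum_p v_p(q(\alpha_p))=+\infty$.

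For $(\Rightarrow)$ I would argue by contrapositive. Setting $M_p=\sup_{\alpha\in F_p}v_p(q(\alpha))\in\Z_{\geq 0}\cup\{+\infty\}$, compactness of $F_p$ together with the continuity of $v_p\circ q$ into the discrete space $\Z\cup\{+\infty\}$ guarantees that $M_p$ is attained by some $\tilde\alpha_p\in F_p$ (either $q$ has a root in $F_p$ and $M_p=+\infty$, or the image of $v_p\circ q$ on $F_p$ is a finite subset of $\Z_{\geq 0}$). If no tuple realises $\sum_p v_p(q(\alpha_p))=+\infty$, then applying this to $(\tilde\alpha_p)_p$ forces each $M_p<+\infty$ and $\sum_p M_p<+\infty$, so $a=\prod_p p^{M_p}\in\N$ is a well-defined positive integer. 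Then $\varphi(X)=a/q(X)\in\Q(X)$ satisfies
$$v_p(\varphi(\alpha))=M_p-v_p(q(\alpha))\geq 0\qquad\text{for all $p$ and all $\alpha\in F_p$,}$$
so $\varphi\in\bigcap V_{p,\alpha}$, while $v_q(\varphi)=-1<0$ shows $\varphi\notin V_q$, proving $\bigcap V_{p,\alpha}\not\subseteq V_q$.

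The main hurdle is the global bookkeeping in $(\Leftarrow)$: one has to transport the collection of \emph{local} integer-valuedness conditions $\varphi(\alpha_p)\in\Z_p$ into a \emph{single} global bound $v_p(q(\alpha_p))\leq v_p(c)$ valid at every prime simultaneously, where $c$ is a fixed nonzero integer coming from a Bezout identity; without this uniformity the divergence of $\sum_p v_p(q(\alpha_p))$ could not be contradicted. In $(\Rightarrow)$ the role of the closedness hypothesis on $F_p$ is precisely to make compactness available, which is what ensures that $M_p$ is attained and that the supremum of $\sum_p v_p(q(\alpha_p))$ over the product $\prod F_p$ decouples into $\sum_p M_p$. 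Minor bookkeeping using the convention that an empty intersection equals $\Q(X)$ handles the case where some $F_p$ is empty.
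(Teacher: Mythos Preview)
Your proof is correct. The $(\Rightarrow)$ direction matches the paper's: both construct an explicit witness $\varphi=a/q$ with $a=\prod_p p^{M_p}$ (the paper phrases this by restricting to the finite set $\mathbb{P}_q$ of primes where $M_p>0$, but the content is identical, and the use of compactness of $F_p$ to see that $M_p$ is finite and attained is the same). The $(\Leftarrow)$ direction, however, follows a genuinely different route. The paper invokes the Gilmer--Heinzer criterion (Proposition~\ref{GH}): to obtain $V_q\supseteq\bigcap V_{p,\alpha}$ it verifies that every finitely generated ideal $I\subseteq\mathfrak{P}_q$ lies in some $\mathfrak{M}_{p,\alpha_p}$, by writing $I\subseteq(qf_1,\dots,qf_n)$ and choosing $p$ in the infinite set $\mathbb{P}_q$ large enough that $f_1,\dots,f_n\in\Z_{(p)}[X]$. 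Your argument bypasses this Pr\"ufer-domain machinery entirely: writing a hypothetical $\varphi=N/(q^kT)\notin V_q$ and using a Bezout relation $AN+Bq=c$ with $c\in\Z\setminus\{0\}$, you extract a \emph{single} integer $c$ that uniformly bounds $v_p(q(\alpha_p))\leq v_p(c)$ at every prime, immediately contradicting $\sum_p v_p(q(\alpha_p))=+\infty$. Your approach is more elementary and self-contained; the paper's approach fits naturally into its running framework of survival of prime ideals in overrings of Pr\"ufer domains and makes the ideal-theoretic picture explicit.
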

Note that the latter condition means that: either there exist $p\in\mathbb{P}$ and $\alpha_p\in F_p$ such that $q(\alpha_p)=0,$ or there exist infinitely many primes $p_n\in\mathbb{P}$ and some $\alpha_{p_n}\in F_{p_n}$ such that $v_{p_n}(q(\alpha_{p_n}))\geq 1$. Example~\ref{exbelow} below shows that the latter condition can really occur.

\begin{proof}
Since $\mathfrak{P}_q=\{qf\in\Int(\Z)\mid f\in\Q[X]\}$ and $\Q[X]$ is countable, we may fix a sequence $\{f_n\}_{n\geq 0}$ of polynomials in $\Q[X]$ such that the $qf_n$'s generate $\mathfrak{P}_q$. We also consider the set
$$\mathbb{P}_{q}\doteqdot\{p\in\mathbb{P}\mid \exists\alpha_p\in F_p \textnormal{ such that }q\in \mathfrak{M}_{p,\alpha_p}\}.$$

Assume first that there exists $(\alpha_p)\in\prod_{p\in\mathbb{P}} F_p$ such that $\sum_{p\in\mathbb{P}}v_p(q(\alpha_p))=\infty$. If for some prime $p$ and some $\alpha\in F_p$, $q(\alpha)=0$, then $V_q\supset V_{p,\alpha},$ and hence, $V_q\supset\bigcap_{p\in\mathbb{P}}\bigcap_{\alpha\in F_p}V_{p,\alpha}$. Suppose that, for each $p\in\SP$, $q(X)$ has no roots in $F_p$. It follows that the set 
$\mathbb{P}_{q}$ is infinite. 
Let $I\subseteq \mathfrak{P}_q$ be any finitely generated ideal. There exists $n$ such that $I\subseteq(qf_1,\ldots,qf_n)$. Since, for almost all $p\in\SP$, the polynomials $f_1,\ldots,f_n$ are in $\Z_{(p)}[X]$, there exists $p\in\SP_q$ such that $f_1,\ldots,f_n\in\Z_{(p)}[X],$ and hence, for the above $\alpha_p\in F_p$, $v_p(q(\alpha_p)f_j(\alpha_p))\geq v_p(q(\alpha_p))>0$, for $1\leq j\leq n$. Consequently, $I\subseteq \mathfrak{M}_{p,\alpha_p}$, which shows by Proposition~\ref{GH} that $V_q\supset\bigcap_{p\in\mathbb{P}}\bigcap_{\alpha\in F_p}V_{p,\alpha}$.

Conversely, assume that $\mathbb{P}_q=\{p_1,\ldots,p_s\}$ and $m_{i}=\sup\{v_{p_i}(q(\alpha))\mid\alpha\in F_{p_i}\}<\infty$ for $i=1,\ldots,s$ ($\Leftrightarrow q(\alpha)\not=0$, for each $\alpha\in F_{p_i}$, $i=1,\ldots,s$ since $F_{p_i}$ is closed). Then, consider the rational function:
$$\varphi(X)=\frac{\prod_{i}^s p_i^{m_i}}{q(X)}$$
For every $p_i\in\SP_q$ and every $\alpha_{p_i}\in F_{p_i}$,  $v_p(q(\alpha_{p_i}))\leq m_i$, and hence, $\varphi\in V_{p_i,\alpha_{p_i}}$. For every $p\in\SP\setminus\mathbb{P}_q$ and every $\alpha_p\in F_p$, $v_p(q(\alpha_p))=0$, and hence, $\varphi\in V_{p,\alpha_p}$. Consequently, $\varphi\in\bigcap_{p\in\mathbb{P}}\bigcap_{\alpha_p\in F_p}V_{p,\alpha_p},$
while clearly $\varphi\notin V_q$.
\end{proof}
\medskip
\begin{Ex}\label{exbelow}
This example shows that a minimal non-unitary valuation overring of some ring $R_{{\mathcal P},\, (F_p)_{p\in\SP}}$ can be superfluous. Let $q(X)=X$. Suppose $\mathfrak{P}_q=\bigcup_{n\in\N}I_n$ where $\{I_n\}_{n\geq 0}$ is an increasing sequence of ideals, each of them generated by $Xf_1(X),\ldots,Xf_n(X)$, for some $f_i\in\Q[X]$. Let $p_n$ be the $n$-th prime. For each $n\in\N$, there exists $a_n\in\N$ large enough such that $I_n\subset\mathfrak{M}_{p_n,p_n^{a_n}}$, exactly by the same argument of the above proof. Then, by Proposition~\ref{GH}, $V_q$ is an overring of $\bigcap_{n\in\N}V_{p_n,\,p_n^{a_n}}$, even though, by (\ref{BqMpalpha}), $V_{p_n,\,p_n^{a_n}}\not\subset V_q$ for each $n\in\N$. Hence, $V_q$ is a minimal overring of $\bigcap_{n\in\N}V_{p_n,\,p_n^{a_n}}\cap V_q$ which is superfluous. Or, if we want to consider integer-valued polynomials, let $E=\cup_{n\in\N}\{p_n^{a_n}\}$, then we have
$$\Int(E,\Z)=\bigcap_{q\,\in\,{\mathcal P}_{\mathrm{irr}}}\Q[X]_{(q)}\;\cap\; \bigcap_{p\in\SP}\;\bigcap_{n\,\in\, \N}V_{p,\,p_n^{a_n}}$$
where the minimal valuation overring $V_X$ of $\Int(E,\Z)$ is superfluous.
\end{Ex}
\medskip

\begin{Rems}\label{Remarks} Let $\mathcal{P}\subseteq\Pirr, q_0\in\Pirr,$ and, for each $p\in\SP$, let $F_p$ be a closed subset of $\Z_p.$   

(1) Theorem \ref{vqunder} may be generalized to an arbitrary intersection of unitary and non-unitary valuation domains:
$$R_{{\mathcal P},\, (F_p)_{p\in\SP}}\subset V_{q_0}\Leftrightarrow\; \textnormal{ either }q_0\in\mathcal{P}\textnormal{ or }\exists\; (\alpha_p)\in\prod_{p\in\mathbb{P}}F_p\textnormal{ such that }\sum_{p\in\mathbb{P}}v_p(q_0(\alpha_p))=+\infty\;$$
In fact, if $V_{q_0}$ is an overring of $R_{{\mathcal P},\, (F_p)_{p\in\SP}}$ and there is no $(\alpha_p)\in\prod_{p\in\mathbb{P}}F_p$ such that $\sum_{p\in\mathbb{P}}v_p(q_0(\alpha_p))=+\infty$, then, by Theorem \ref{vqunder}, $V_{q_0}$ is not an overring of $\bigcap_{p\in\mathbb{P}}\bigcap_{\alpha\in F_p}V_{p,\alpha}$. Hence, by the techniques of Proposition \ref{GH}, $V_{q_0}$ is easily seen to be an overring of $\bigcap_{q\in\mathcal{P}}V_q$, and so, by Lemma \ref{Vq0overringVq}, $q_0\in\mathcal{P}$, as wanted. Conversely, by Theorem \ref{vqunder}, each condition on the right-hand side implies that $V_{q_0}$ is an overring of $R_{{\mathcal P},\, (F_p)_{p\in\SP}}$.

(2) If $F_p$ is an empty set for all but finitely many primes $\{p_1,\ldots,p_n\}$ (for example, overrings $R_{\mathcal{P},F_p}$ of $\Int(\Z_{(p)})$), then $V_q$ is an overring of $R_{{\mathcal P},\, (F_p)_{p\in\SP}}$ if and only if $q(X)$ has a root is some $F_{p_i}$, $i=1,\ldots,n$, or $q\in\mathcal{P}$. Therefore, a minimal non-unitary valuation overring $V_q$ of $R_{{\mathcal P},\, (F_{p_i})_{i=1,\ldots,n}}$ is not superfluous. 

\end{Rems}

\section{Polynomial overrings of $\Int\left(\Z\right)$ as intersections of valuation domains}

We consider now a polynomial overring $R$ of $\Int(\Z)$. Analogously to the previous case of overrings of $\Int(\Z_{(p)})$ we consider the subset $Z_p(R)$.

\medskip

\noindent{\bf Notation}. For every ring $R$ such that $\Int(\Z)\subseteq R\subseteq \Q[X]$ and every $p\in\SP$, let $Z_p(R)$ be the  following subset of $\Z_p:$
\be Z_p(R)\doteqdot\{\alpha\in\Z_p\mid \Mm_{p,\alpha}R\subsetneq R\}\,. \ee
We already introduced $Z_p(R)$ in (\ref{OmegaRp}) for polynomial overrings of $\Int(\Z_{(p)})$. Fortunately both notations agree to each other since clearly $Z_p(R)=Z_p(R_{(p)}):$
\begin{equation}\label{ZpRZpRp}
\alpha\in Z_p(R)\Leftrightarrow R\subseteq V_{p,\alpha}\Leftrightarrow R_{(p)}\subseteq V_{p,\alpha}\Leftrightarrow \alpha\in Z_p(R_{(p)})\,.
\end{equation}

Analogously to Proposition~\ref{representations}, we consider now the representations of $R$ as an  intersection of valuation overrings.

\begin{Prop}\label{representationsglobal}
Let $R$ be any polynomial overring of $\Int(\Z)$. We have the following representations of $R$ as an intersection of valuation overrings.
\begin{enumerate}
\item[$(i)$] The intersection of all the valuation overrings:
\begin{equation}\label{allRglobal}
R=\bigcap_{q\,\in\,{\mathcal P}_{\mathrm{irr}}}\Q[X]_{(q)}\;\cap\; \bigcap_{p\in\SP}\;\bigcap_{\alpha\,\in\, Z_p(R)}V_{p,\alpha}
\end{equation}
\item[$(ii)$] The intersection of all the minimal valuation overrings:
\begin{equation}\label{representationRpglobal}
R=\bigcap_{q\,\in\,{\mathcal P}_{\mathrm{irr}}^{Z(R)}}\Q[X]_{(q)}\;\cap\; \bigcap_{p\in\SP}\;\bigcap_{\alpha\,\in \, Z_p(R)}V_{p,\alpha}
\end{equation}
where ${\mathcal P}_{\mathrm{irr}}^{Z(R)}$ denotes the set of irreducible polynomials of $\Z[X]$ which have no roots in $Z_p(R)$ whatever $p\in\SP$.
\item[$(iii)$] For every ${\mathcal P}\subseteq{\mathcal P}_{\mathrm{irr}}$ and every $E_p\subseteq Z_p(R)\;(p\in\SP),$ the following intersection of valuation overrings of $R$
\begin{equation}\label{nec} 
R_{{\mathcal P},\, (E_p)_{p\in\SP}}=\bigcap_{q\,\in\,{\mathcal P}}\Q[X]_{(q)}\;\cap\; \bigcap_{p\in\SP}\;\bigcap_{\alpha\,\in\, E_p}V_{p,\alpha}\,
\end{equation}
is equal to $R$ if and only if 
\begin{enumerate}
\item[$(a)$] ${\mathcal P}\supseteq{\mathcal P}_{\mathrm{irr}}^{Z_0(R)}$
 where ${\mathcal P}_{\mathrm{irr}}^{Z_0(R)}$ is formed by the irreducible polynomials $q$ of $\Z[X]$ such that, for every $p\in\SP$, $q$ has no root in $Z_p(R)$, and there do not exist two infinite sequences $\{p_j\}_{j\in \N}$ and $\{\alpha_j\}_{j\in\N}$ where $p_i\in \SP$, $\alpha_j\in Z_{p_j}(R)$, and $v_{p_j}(q(\alpha_j))>0,$  
\item[$(b)$] for every $p\in\SP$, $E_p$ is $p$-adically dense in $Z_p(R)$.
\end{enumerate}
\end{enumerate}
\end{Prop}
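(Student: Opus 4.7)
The strategy mirrors the proof of Proposition~\ref{representations}, but with the extra care demanded by the presence of all primes simultaneously. For (i), note that $R$ is Pr\"ufer by Proposition~\ref{overringPrufer}(i), hence equals the intersection of its valuation overrings. The valuation overrings of $\Int(\Z)$ are the $V_q$ for $q\in\Pirr$ and the $V_{p,\alpha}$; since $R\subseteq\Q[X]\subseteq V_q$, every $V_q$ contains $R$, while by equivalence~(\ref{ZpRZpRp}) we have $R\subseteq V_{p,\alpha}$ iff $\alpha\in Z_p(R)$. For (ii), I would characterise the minimal valuation overrings as those whose center in $R$ is maximal. Each $V_{p,\alpha}$ with $\alpha\in Z_p(R)$ is already minimal over $\Int(\Z)$ and hence minimal over $R$ via Proposition~\ref{overringPrufer}(i). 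For $V_q$, its center $\mathfrak{P}_q R$ is maximal in $R$ iff $\mathfrak{P}_q$ is maximal among the surviving primes of $\Int(\Z)$, which by~(\ref{0}) amounts to $q$ having no root in any $Z_p(R)$, i.e., $q\in\Pirr^{Z(R)}$. Removing the non-minimal $V_q$'s is harmless since each such $V_q$ strictly contains some surviving $V_{p,\alpha}$.

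For the sufficiency direction of (iii), assume (a) and (b). Condition (b), combined with Lemma~\ref{lemmalopmore} applied at each prime, yields $\bigcap_{\alpha\in E_p}V_{p,\alpha}=\bigcap_{\alpha\in Z_p(R)}V_{p,\alpha}$ for each $p$. For the non-unitary factors, I appeal to Remark~\ref{Remarks}(1), which extends Theorem~\ref{vqunder}: every $V_q$ with $q\in\Pirr\setminus\Pirr^{Z_0(R)}$ is automatically an overring of the global unitary intersection $\bigcap_{p}\bigcap_{\alpha\in Z_p(R)}V_{p,\alpha}$, hence is redundant inside $R_{\mathcal{P},(E_p)}$. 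After absorbing these redundant factors one reduces to $\bigcap_{q\in\Pirr^{Z_0(R)}}V_q\cap\bigcap_{p}\bigcap_{\alpha\in Z_p(R)}V_{p,\alpha}$; enlarging $\Pirr^{Z_0(R)}$ to $\Pirr^{Z(R)}$ (again by the same redundancy) matches~(ii) and yields $R$.

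For necessity, I argue by contraposition. If (b) fails at some $p_0$, choose $\alpha_0\in Z_{p_0}(R)\setminus\overline{E_{p_0}}$; Proposition~\ref{unitaryvaloverrings} gives $R_{\mathcal{P},(E_p)}\not\subseteq V_{p_0,\alpha_0}$, while $R\subseteq V_{p_0,\alpha_0}$ since $\alpha_0\in Z_{p_0}(R)$, contradicting $R=R_{\mathcal{P},(E_p)}$. If (a) fails, pick $q_0\in\Pirr^{Z_0(R)}\setminus\mathcal{P}$; since $E_p\subseteq Z_p(R)$ for every $p$ and $q_0\in\Pirr^{Z_0(R)}$, no tuple $(\alpha_p)\in\prod_p E_p$ satisfies $\sum_p v_p(q_0(\alpha_p))=+\infty$, so Remark~\ref{Remarks}(1) yields $R_{\mathcal{P},(E_p)}\not\subseteq V_{q_0}$; yet $q_0\in\Pirr^{Z(R)}$ forces $R\subseteq V_{q_0}$ via~(ii). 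The main technical subtlety is the gap between $\Pirr^{Z_0(R)}$ and $\Pirr^{Z(R)}$: condition (a) is strictly weaker than requiring $\mathcal{P}\supseteq\Pirr^{Z(R)}$, because the ``infinite sum of valuations'' phenomenon from Theorem~\ref{vqunder}, witnessed by Example~\ref{exbelow}, allows certain $V_q$'s to be absorbed into the unitary intersection across infinitely many primes.
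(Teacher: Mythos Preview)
Your argument is correct and follows essentially the same route as the paper: parts (i) and (ii) are handled identically via Proposition~\ref{overringPrufer} and the globalized version of Corollary~\ref{nonunitaryRp}, and for (iii) both you and the paper reduce sufficiency to Proposition~\ref{unitaryvaloverrings} and Theorem~\ref{vqunder}, while necessity is argued by contraposition using the same two results. Two cosmetic remarks: in the necessity case where (a) fails, the containment $R\subseteq V_{q_0}$ is immediate from $R\subseteq\Q[X]$ and needs no appeal to (ii); and when you invoke Remark~\ref{Remarks}(1) with the sets $E_p$, note that Theorem~\ref{vqunder} is stated for closed subsets, so strictly speaking one should pass to $\overline{E_p}\subseteq Z_p(R)$ first (harmless by Proposition~\ref{unitaryvaloverrings}) --- the paper's proof has the same minor elision.
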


\begin{proof}
Formula (\ref{allRglobal}) is clearly a consequence of Proposition~\ref{representations}\,(i).
Analogously to Formula (\ref{representationRp}), Formula (\ref{representationRpglobal}) follows from the globalization of Corollary~\ref{nonunitaryRp}: a prime ideal $\mathfrak{P}_q$ of $\Int(\Z)$ is maximal in $R$ if and only if, for each $p\in\mathbb{P}$,  $q(X)$ has no roots in $Z_p(R)$. 

It remains to prove assertion (iii). If (a) and (b) hold then, by Proposition~\ref{unitaryvaloverrings} and Theorem~\ref{vqunder}, the right-hand side of Formula~(\ref{nec}) is equal to the right-hand side of Formula~(\ref{allRglobal}), and hence to $R$. 

Assume now that (a) does not hold: there exists $r\in{\mathcal P}_{\mathrm{irr}}^{Z_0(R)}\setminus{\mathcal P}.$ 
Since $E_p\subseteq Z_p(R)$ for every $p\in\SP$ and $r\notin {\mathcal P}$, it follows from Theorem~\ref{vqunder} that $V_r=\Q[X]_{(r)}\not\supseteq R_{{\mathcal P},\, (E_p)_{p\in\SP}};$ in particular, $\Q[X]\not\supseteq R_{{\mathcal P},\, (E_p)_{p\in\SP}},$ and hence, $R\subsetneq R_{{\mathcal P},\, (E_p)_{p\in\SP}}$. 

Finally, assume that (b) does not hold: there is some $p_0\in\SP$ such that $E_{p_0}$ is not $p_0$-adically dense in $Z_{p_0}(R)$, in other words, there is some $\alpha_0\in Z_{p_0}(R)$ which is not in the topological closure of $E_{p_0}$. By Proposition~\ref{unitaryvaloverrings},
$\bigcap_{p\in\mathbb{P}}\bigcap_{\alpha\in E_p}V_{p,\alpha}\cap \Q[X]\not\subseteq V_{p_0,\alpha_0},$ and hence, once more, $R\subsetneq R_{{\mathcal P},\, (E_p)_{p\in\SP}}$.
\end{proof}

\begin{Rem}\label{Remark52}
We can generalize Corollary \ref{irredundantlocal} to overrings of $\Int(\Z)$ in the following way: a polynomial overring $R$ of $\Int(\Z)$ admits an irredundant representation if and only if, for each $p\in\SP$,  $Z_p(R)$ contains a $p$-adically dense subset formed by isolated points.
\end{Rem}


\section{Polynomial overrings of $\Int\left(\Z\right)$ as integer-valued polynomial rings over subsets of $\hZ$}

In this section we give another point of view about polynomial overrings $R$ of $\Int(\Z)$, in order to represent them as rings of integer-valued polynomials. We know that $R=\cap_{p\in\SP}R_{(p)}$ and that $R_{(p)}=\Int_{\Q}(Z_p(R),\Z_p)$. Consequently, $R$ is equal to an intersection of different integer-valued polynomial rings as $p$ runs through the set of prime numbers:
\begin{equation}\label{inter}
R=\bigcap_{p\in\SP}\Int_{\Q}(Z_p(R),\Z_p)\,.
\end{equation}
However, it seems to be more convenient to consider all the $p$-adic completions $\Z_p$ at the same time. Classically, the way to do that is via the ring of {\em finite adeles} ${\mathcal A_f(\Q)}$ (`finite' refers to the fact we forget the Archimedean absolute value). A finite {\em adele} is an element $\underline{\alpha}=(\alpha_p)_p$ of the product $\prod_{p\in\mathbb{P}} \Q_p$ such that for all but finitely many $p$'s, $\alpha_p$ belongs to $\Z_p$ (for instance, see~\cite[Section 6.2, p. 286]{Nark}). 

Note that $\Q$ embeds diagonally into $\prod_{p\in\mathbb{P}} \Q_p$ and its image is in $\mathcal{A}_f(\Q),$ actually $\Q$ embeds into the group of units of $\mathcal{A}_f(\Q):$
recall that this group, denoted by $\mathcal I_f(\Q)$ and called finite ideles, is formed by the elements $\underline{\alpha}=(\alpha_p)_p\in\prod_{p\in\SP}\Q_p^*$ such that $v_p(a_p)=0$ for all but finitely many $p$. Given $\underline{\alpha}=(\alpha_p)_p\in\mathcal A_f(\Q)$ and $f\in\Q[X]$, we have clearly
\begin{equation*}
f(\underline{\alpha})=(f(\alpha_p))_p\in\mathcal A_f(\Q)\subset\prod_{p\in\mathbb{P}} \Q_p,
\end{equation*}
that is, every polynomial with rational coefficients maps an adele into an adele. For this reason, the ring of integer-valued polynomials over the ring of finite adeles is trivial: 
$$\Q[X]=\Int_{\Q}(\mathcal A_f(\Q))=\{f\in\Q[X] \mid f(\underline{\alpha})\in\mathcal{A}_f(\Q),\forall \underline{\alpha}\in\mathcal{A}_f(\Q)\}.$$
However, note that $\mathcal A_f(\Q)$ contains as a subring the product $\prod_{p\in\SP}\Z_p$, which is isomorphic to $\hZ$, the profinite completion of $\Z$ with respect to the fundamental system of neighbourhoods of $0$ consisting of all the non-zero ideals of $\Z$.


Given $f\in\Q[X]$ and $\underline{\alpha}\in\mathcal{A}_f(\Q)$,  we say that $f$ is {\em integer-valued} at $\underline{\alpha}$ if $f(\underline{\alpha})=(f(\alpha_p))_p\in\hZ= \prod_{p}\Z_p$. 
Then, analogously to Definition~\ref{defint}, we introduce the following:

\begin{Def}
For every subset $\underline{E}$ of $\widehat{\Z}$, the ring of integer-valued polynomials on $\underline{E}$ is
$$\Int_{\Q}(\underline{E},\widehat{\Z})=\{f\in\Q[X] \mid f(\underline{\alpha})\in\widehat{\Z},\;\forall\underline{\alpha}\in\underline{E}\}\,.$$
\end{Def}

\noindent{\bf Notation}.
For each polynomial overring $R$ of $\Int(\Z)$, we consider the following set of finite adeles:
$$\underline{Z}_R\doteqdot\prod_{p} Z_p(R)\subseteq\prod_{p}\Z_p=\hZ$$
Clearly, $\underline{Z}_R=\{(\alpha_p)_p \in \hZ \mid \mathfrak{M}_{p,\alpha_p}R\subsetneq R, \forall p\in\SP \}$. With the previous notation, Equality~(\ref{inter}) may then be written:
\be\label{Z-R} R=\Int_{\Q}(\underline{Z}_R,\hZ)\,,\ee
which means that every polynomial overring $R$ of $\Int(\Z)$ may be considered as the ring formed by polynomials which are integer-valued over a subset of $\hZ$. 
Note that, since for each $p\in\mathbb{P}$, $Z_p(R)$ is a closed subset of $\Z_p$, and hence, is compact, the subset $\underline{Z}_R$ is also compact in $\hZ$ where $\hZ=\prod_{p\in\SP}\Z_p$ is endowed with the product topology. The following theorem is the globalized version of Theorem~\ref{containmentoverrings}.

\begin{Thm}\label{globalintpoly}
Let $\mathcal{R}$ be the set of polynomial overrings of $\Int(\Z)$ and let $\mathcal{F}(\hZ)$ be the family of compact subsets of $\hZ$ of the form $\prod_{p\in\SP}F_p$ where $F_p$ is a closed subset of $\Z_p$. The following maps which reverse the containments are inverse to each other:
$$\varphi:\mathcal{R}\ni R\mapsto \underline{Z}_R=\prod_{p\in\SP}Z_p(R)\in\mathcal{F}(\hZ)\quad\textrm{ and }\quad \psi:\mathcal{F}(\hZ)\ni\underline{F}\mapsto\Int_{\Q}(\underline{F},\hZ)\in\mathcal{R}\,.$$
\end{Thm}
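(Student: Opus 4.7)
The plan is to mirror the structure of the local Theorem~\ref{containmentoverrings}: verify that both maps are well-defined and reverse containments (essentially automatic), then check the two composites separately, reducing $\varphi\circ\psi$ to the local statement by a localization argument.

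First I would observe that $\psi(\underline{F})=\Int_{\Q}(\underline{F},\hZ)$ really is a polynomial overring of $\Int(\Z)$: the containment $\Int_{\Q}(\underline{F},\hZ)\subseteq\Q[X]$ is by definition, and if $f\in\Int(\Z)$ then by continuity $f(\Z_p)\subseteq\Z_p$ for every $p$, so $f(\underline{\alpha})\in\hZ$ for every $\underline{\alpha}\in\hZ\supseteq\underline{F}$. That $\varphi(R)\in\mathcal{F}(\hZ)$ follows from Proposition~\ref{CharacterizOmegaRp} (applied to $R_{(p)}$ via~(\ref{ZpRZpRp})) plus Tychonoff. The containment-reversing property is immediate from the definitions: $R_1\subseteq R_2$ forces $Z_p(R_1)\supseteq Z_p(R_2)$ via the $V_{p,\alpha}$-characterization, and $\underline{F}_1\subseteq\underline{F}_2$ trivially forces $\Int_{\Q}(\underline{F}_1,\hZ)\supseteq\Int_{\Q}(\underline{F}_2,\hZ)$.

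For $\psi\circ\varphi=\mathrm{id}_{\mathcal R}$, I would simply invoke Equality~(\ref{Z-R}): combining $R=\bigcap_{p}R_{(p)}$ from~(\ref{R=Rp}) with $R_{(p)}=\Int_{\Q}(Z_p(R),\Z_p)$ from Proposition~\ref{RpIntval}, and noting that $f\in\Int_{\Q}(\underline{Z}_R,\hZ)$ if and only if $f(\alpha_p)\in\Z_p$ for every $p\in\SP$ and every $\alpha_p\in Z_p(R)$, I get $R=\Int_{\Q}(\underline{Z}_R,\hZ)=\psi(\varphi(R))$.

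For $\varphi\circ\psi=\mathrm{id}_{\mathcal{F}(\hZ)}$, the heart of the matter is the localization identity
\[
\Int_{\Q}(\underline{F},\hZ)_{(p)}=\Int_{\Q}(F_p,\Z_p)
\]
for every $p\in\SP$ and every $\underline{F}=\prod_{\ell}F_\ell\in\mathcal{F}(\hZ)$. The containment $\subseteq$ is immediate since $1/n\in\Z_p$ when $n\in\Z\setminus p\Z$. For $\supseteq$, given $f\in\Int_{\Q}(F_p,\Z_p)$, I would choose the largest divisor $n$ of the denominator of $f$ coprime to $p$: then $nf$ has only powers of $p$ in its denominator, so $nf\in\Z_\ell[X]$ for every $\ell\neq p$ (hence $(nf)(F_\ell)\subseteq\Z_\ell$ trivially), and $(nf)(F_p)\subseteq\Z_p$ because $n\in\Z_p$ and $f(F_p)\subseteq\Z_p$; thus $nf\in\Int_{\Q}(\underline{F},\hZ)$ with $n\in\Z\setminus p\Z$. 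Once this identity is established, (\ref{ZpRZpRp}) gives $Z_p(\Int_{\Q}(\underline{F},\hZ))=Z_p(\Int_{\Q}(F_p,\Z_p))$, and Theorem~\ref{containmentoverrings} (applied to the closed set $F_p\subseteq\Z_p$) yields $Z_p(\Int_{\Q}(F_p,\Z_p))=F_p$. Taking the product over $p$ gives $\varphi(\psi(\underline{F}))=\prod_p F_p=\underline{F}$.

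The only nontrivial step is the localization identity above; everything else is essentially bookkeeping combining the local theorem~\ref{containmentoverrings}, Proposition~\ref{RpIntval}, and the identification (\ref{ZpRZpRp}) of $Z_p(R)$ with $Z_p(R_{(p)})$.
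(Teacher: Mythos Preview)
Your proof is correct and shares the same overall skeleton as the paper's (handle $\psi\circ\varphi$ via Equality~(\ref{Z-R}), then reduce $\varphi\circ\psi$ to the local bijection), but the mechanism for the second composite is genuinely different. The paper does not prove a localization identity; instead it writes $\Int_{\Q}(\underline{F},\hZ)=R_{\Pirr,(F_p)_p}$ and applies Proposition~\ref{unitaryvaloverrings} directly: the containment $\Int_{\Q}(\underline{F},\hZ)\subseteq V_{p,\alpha}$ is, by that proposition, equivalent to $\alpha\in\overline{F_p}=F_p$, so $Z_p(\psi(\underline{F}))=F_p$ in one stroke. Your route instead proves $\Int_{\Q}(\underline{F},\hZ)_{(p)}=\Int_{\Q}(F_p,\Z_p)$ by an elementary denominator-clearing argument and then feeds this into Theorem~\ref{containmentoverrings}. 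The payoff of your approach is that it bypasses the Section~4 machinery entirely (and hence the Gilmer--Heinzer Proposition~\ref{GH} lurking behind Proposition~\ref{unitaryvaloverrings}); the paper's approach is terser and better aligned with its valuation-theoretic development.

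One small caveat: your localization identity silently assumes that every $F_\ell$ is nonempty, so that each $\alpha\in F_p$ actually occurs as the $p$-th coordinate of some element of $\underline{F}$ (needed for the inclusion $\subseteq$). If $\underline{F}=\emptyset$ you should either handle this case separately (trivially $\psi(\emptyset)=\Q[X]$ and $Z_p(\Q[X])=\emptyset$ for all $p$) or simply stipulate $F_\ell=\pi_\ell(\underline{F})$ from the start.
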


\begin{proof}
By Equality (\ref{Z-R}), $\psi\circ\varphi=id_{\mathcal{R}}$. Consider now $\varphi\circ\psi:$ 
for every $\underline{F}=\prod_pF_p\in\mathcal{F}(\Z_p)$, one has
$\varphi(\psi(\underline{F}))=\underline{Z}_{\Int_{\Q}(\underline{F},\Z_p)}=\{(\alpha_p)_p\in\prod_p\Z_p\mid \forall f\in\Int(\underline{F},\hZ),\;\forall p\in\SP,\; f(\alpha_p)\in\Z_p\}=\{(\alpha_p)_p\in\prod_p\Z_p\mid \Int(\underline{F},\hZ) \subseteq V_{p,\alpha_p},\forall p\in\SP\}$ which is equal to $\underline{F}$ by Proposition~\ref{unitaryvaloverrings}.
\end{proof}
\medskip
\begin{Rem}\label{polynomial closure in hZ}
Let $\underline{F}$ be a generic compact subset of $\hZ$ and consider the following ring of integer-valued polynomials:
$$R=\Int_{\Q}(\underline{F},\hZ).$$
For each $p\in\mathbb{P}$, let $\pi_p:\hZ\to\Z_p$ be the canonical projection. Then, for each $f\in\Int_{\Q}(\underline{F},\hZ)$ and for each $\alpha_p\in\pi_p(\underline{F})$, $p\in\SP$,  we have $f(\alpha_p)\in\Z_p.$ Consequently, $f\in\Int_{\Q}(\pi_p(\underline{F}),\Z_p)$. Therefore,
$$R\subseteq\bigcap_p \Int_{\Q}(\pi_{p}(\underline{F}),\Z_p)=\Int_{\Q}(\prod_p \pi_{p}(\underline{F}), \prod_p \Z_p)\subseteq \Int_{\Q}(\underline{F},\hZ)=R $$
since $\underline{F}\subseteq\prod_p \pi_{p}(\underline{F})$.
Finally,
$$\Int_{\Q}(\underline{F},\hZ)=\Int_{\Q}(\prod_p \pi_{p}(\underline{F}),\hZ).$$
Since the projections $\pi_p$ are closed maps, each $\pi_p(\underline{F})$ is a closed subset of $\Z_p$. Therefore, by Theorem \ref{globalintpoly}, we have proved that $\underline{Z}_{\Int_{\Q}(\underline{F},\hZ)}=\prod_p \pi_{p}(\underline{F})$, which is an element of $\mathcal{F}(\hZ)$. In other words, $\prod_p \pi_{p}(\underline{F})$ is precisely equal to the subset of $\hZ$ of those $\underline{\alpha}$ such that $f(\underline{\alpha})\in\hZ$, for each $f\in \Int_{\Q}(\underline{F},\hZ)$. Generalizing the terminology of section \ref{polynomialclosure}, one could say that the {\em polynomial closure} of $\underline{F}\subseteq\hZ$ is the compact subset $\prod_p \pi_{p}(\underline{F})$.
\end{Rem}
\medskip
\begin{Rem}
Let $E\subseteq\Z$ be an infinite subset. We denote by $\widehat{E}$ the topological closure of $E$ in $\hZ=\prod_{p\in\mathbb{P}}\Z_p$, by $E_p$ the topological closure of $E$ in $\Z_p$, for each prime $p$ and by $\underline{E}$ the direct product $\prod_{p\in\mathbb{P}}E_p\subseteq\hZ$. By Remark \ref{polynomial closure in hZ}, $\underline{E}$ is the polynomial closure of $E$ in $\hZ$.  It is easy to see that 
$$\widehat{E}\subseteq\underline{E}$$
since the canonical embedding of $E$ into $\hZ$ is contained in $\prod_{p\in\SP}E$ (in fact, strictly contained if Card$(E)\not=1$) whose topological closure is $\underline{E}$.
Moreover, for each prime $p$, $\pi_p(\widehat{E})=\pi_p(\underline{E})=E_p$. In particular, 
$$\Int(E,\Z)=\Int_{\Q}(\widehat{E},\hZ)=\Int_{\Q}(\underline{E},\hZ)$$

We know that locally, for a subset $E\subseteq\Z_{(p)}$, the polynomial closure of $E$ in $\Z_p$ coincides with its topological closure in $\Z_p$ (Proposition \ref{topclospolclos} \& Theorem \ref{containmentoverrings}). The global situation can be different: as the next example shows, in general $\widehat{E}$ can be strictly contained in $\underline{E}$. By definition, an element $\underline{\alpha}\in\underline{E}$ has the property that, for each finite set of primes $\{p_1,\ldots,p_k\}$ and finite set of non-negative integers $\{k_1,\ldots,k_s\}$, there exist $a_i\in E$ such that $a_i\equiv\alpha_{p_i}\pmod{p_i^{k_i}}$, for $i=1,\ldots,s$. In order for $\underline{\alpha}$ to belong to $\widehat{E}$, there should exist $a\in E$ which is a simultaneous solution of all the previous congruences.

\begin{Ex}
Let $E = \Z \setminus\{ -7 + 8\cdot 9 k \mid k\in\Z \}$. It is easy to see that $E$ is dense in $\Z_p$ for each prime $p$, so the polynomial closure of $E$ in $\hZ$ is equal to $\hZ$. However, since there is no $a\in E$ such that the following congruences are satisfied:
$$a \equiv 1 \pmod 8,\;\; a \equiv 2 \pmod 9$$
it follows that $\widehat{E}\subsetneq\underline{E}$.
\end{Ex}

\end{Rem}
\medskip
\begin{Ex}
Let us consider the ring $\Int(\Z)$. Since $\Int(\Z)=\bigcap_{p\in\mathbb{P}}\Int(\Z_{(p)})$ , by (\ref{IntZp}) we have
$$\Int(\Z)=\bigcap_{p\in\mathbb{P}}\Int_{\Q}(\Z_p)=\Int_{\Q}(\widehat{\Z},\widehat{\Z})\doteqdot\Int_{\Q}(\widehat{\Z})$$
Note the analogy of the previous equation with (\ref{IntZp}).
\end{Ex}

\begin{Cor}
For each $p\in\mathbb{P}$, let $R(p)$ be a polynomial overring of $\Int_{\Q}(\Z_{(p)})$. Then, there exists a polynomial overring $R$ of $\Int(\Z)$ such that $R_{(p)}=R(p)$ for each $p\in\mathbb{P}$. 
\end{Cor}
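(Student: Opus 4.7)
The plan is to invoke the two correspondence theorems (local Theorem \ref{containmentoverrings} and global Theorem \ref{globalintpoly}) to construct $R$ directly as an integer-valued polynomial ring over a product of closed subsets.

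First, I would use Theorem \ref{containmentoverrings} at each prime: since $R(p)$ is a polynomial overring of $\Int(\Z_{(p)})$, there exists a closed subset $F_p = Z_p(R(p)) \subseteq \Z_p$ such that
\[ R(p) = \Int_{\Q}(F_p, \Z_p). \]
Next, I would form the compact subset $\underline{F} = \prod_{p\in\SP} F_p \in \mathcal{F}(\hZ)$ and set
\[ R \doteqdot \Int_{\Q}(\underline{F}, \hZ) = \psi(\underline{F}). \]
By Theorem \ref{globalintpoly}, $R$ is a polynomial overring of $\Int(\Z)$.

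It remains to verify that $R_{(p)} = R(p)$ for every prime $p$. Applying $\varphi\circ\psi = id_{\mathcal{F}(\hZ)}$ from Theorem \ref{globalintpoly}, I obtain
\[ \underline{Z}_R = \prod_{p\in\SP} Z_p(R) = \underline{F} = \prod_{p\in\SP} F_p, \]
whence $Z_p(R) = F_p$ for each $p$. Combining the identity $Z_p(R) = Z_p(R_{(p)})$ from (\ref{ZpRZpRp}) with Proposition \ref{RpIntval} applied to the polynomial overring $R_{(p)}$ of $\Int(\Z_{(p)})$, I get
\[ R_{(p)} = \Int_{\Q}(Z_p(R_{(p)}),\Z_p) = \Int_{\Q}(Z_p(R),\Z_p) = \Int_{\Q}(F_p, \Z_p) = R(p), \]
which concludes the proof.

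There is no real obstacle here; the proof is essentially a bookkeeping exercise that extracts the local data $(F_p)_p$ via Theorem \ref{containmentoverrings} and recombines it globally via Theorem \ref{globalintpoly}. The only subtlety worth mentioning is that the global object $\underline{F}$ must be taken as the direct product $\prod_p F_p$ (an element of $\mathcal{F}(\hZ)$) rather than, say, a topological closure inside $\hZ$; this is precisely what guarantees that the projection onto each factor recovers $F_p$ exactly, so that the localizations of $R$ match the prescribed $R(p)$'s without loss.
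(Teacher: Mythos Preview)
Your proof is correct and follows essentially the same route as the paper: both extract the closed sets $F_p=Z_p(R(p))$ via Theorem~\ref{containmentoverrings}, set $R=\Int_{\Q}(\prod_p F_p,\hZ)$, and then verify $Z_p(R)=F_p$ to conclude $R_{(p)}=R(p)$. The only cosmetic difference is that you invoke the packaged identity $\varphi\circ\psi=id$ from Theorem~\ref{globalintpoly} to get $Z_p(R)=F_p$, whereas the paper appeals directly to Proposition~\ref{unitaryvaloverrings} (which is exactly what underlies that part of Theorem~\ref{globalintpoly}).
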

\begin{proof}
By Theorem \ref{containmentoverrings}, the choice, for each $p\in\mathbb{P}$, of an overring $R(p)$ of $\Int(\Z_{(p)})$ corresponds to a closed subset $F_p=Z_p(R(p))\subseteq\Z_p$. Moreover, $R(p)=\Int_{\Q}(F_p,\Z_p)$. Let $\underline{F}=\prod_{p\in\mathbb{P}}F_p\subseteq\hZ$ and let 
$$R=\Int_{\Q}(\underline{F},\widehat{\Z})=\bigcap_{p\in\SP}R(p)\,.$$
We claim that $R$ is the desired polynomial overring of $\Int(\Z)$. Since $R_{(p)}$ and $R(p)$ are elements of the family $\mathcal{R}_p$, by Theorem \ref{containmentoverrings}, it is sufficient to show that $Z_p(R_{(p)})=F_p$. Proposition \ref{unitaryvaloverrings} and (\ref{ZpRZpRp}) allow us to conclude.
\end{proof}


\begin{Rem}\label{ideles}
With our interpretation in terms of finite adeles, we may formulate Theorem~\ref{vqunder} in another way. Let $R$ be a polynomial overring of $\Int(\Z)$. Let $R_{\mathcal P, \underline{Z}_R}$ be a representation of $R$ as an intersection of valuation overrings (Proposition \ref{representationsglobal}). Theorem~\ref{vqunder} says that, for every $q\in\mathcal P,$ $V_q$ is superfluous if and only if there exists $\underline{\alpha}\in\underline{Z}_R$ such that $q(\underline{\alpha})$ is not invertible in $\mathcal A_f(\Q):$ the valuation domain $V_q$ is surperfluous in all representations of $R$ if and only if $q(\underline{Z}_R)\not\subseteq \mathcal I_f(\Q)$.

\end{Rem}

To end our study we show now under which conditions a polynomial overring $R$ of $\Int(\Z)$ is of the simple form $\Int(E,\Z)$ where $E$ is a subset of $\Z$. 

\begin{Cor}\label{simple}
A polynomial overring $R$ of $\Int(\Z)$ is a ring of integer-valued polynomials on a subset of $\Z$ if and only if, for each prime $p,$ the subset $E=\cap_p(Z_p(R)\cap\Z)$ is dense in $Z_p(R)$ for the $p$-adic topology. If this condition holds, then $R=\Int(E,\Z)$.
\end{Cor}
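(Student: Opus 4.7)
The approach is to reduce the question to the local theory already established in Theorem~\ref{containmentoverrings} and Proposition~\ref{topclospolclos}, combined with the elementary identity $\Z=\bigcap_{p\in\SP}\Z_p$ inside $\Q$, which converts the condition ``$f(a)\in\Z_p$ for every prime $p$'' into ``$f(a)\in\Z$''. This will let me go back and forth between $\Int(E,\Z)$ and its family of localizations $\Int_{\Q}(E,\Z_p)$.

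For the sufficiency, assume $E=\bigcap_{p\in\SP}(Z_p(R)\cap\Z)$ is $p$-adically dense in $Z_p(R)$ for every $p$. By Theorem~\ref{containmentoverrings}, the local overring $R_{(p)}=\Int_{\Q}(Z_p(R),\Z_p)$ then coincides with $\Int_{\Q}(E,\Z_p)$ (density forces equal polynomial closures via Proposition~\ref{topclospolclos}). Using $R=\bigcap_{p\in\SP}R_{(p)}$ from (\ref{R=Rp}) and the identity above, I would conclude
\[R=\bigcap_{p\in\SP}\Int_{\Q}(E,\Z_p)=\{f\in\Q[X]\mid f(E)\subseteq\Z\}=\Int(E,\Z).\]

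For the necessity, suppose $R=\Int(E',\Z)$ for some $E'\subseteq\Z$. A short denominator-clearing argument shows $R_{(p)}=\Int_{\Q}(E',\Z_p)$ for each prime $p$, and Theorem~\ref{containmentoverrings} combined with Proposition~\ref{topclospolclos} then identifies $Z_p(R)=Z_p(R_{(p)})$ with the $p$-adic topological closure of $E'$ in $\Z_p$. In particular $E'\subseteq Z_p(R)\cap\Z$ for every $p$, so $E'\subseteq E\subseteq Z_p(R)$; density of $E'$ in its own closure is automatic, and passing to the oversubset $E$ preserves density. The final assertion $R=\Int(E,\Z)$ then follows from the sufficiency already proved.

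The main obstacle is the denominator-clearing step in the necessity, namely checking that $R_{(p)}=\Int_{\Q}(E',\Z_p)$ when $R=\Int(E',\Z)$. Concretely, one writes a given $g\in\Int_{\Q}(E',\Z_p)$ as $h/(p^e m)$ with $h\in\Z[X]$ and $\gcd(m,p)=1$, and uses $g(E')\subseteq\Z_p$ together with $m\in\Z_p^*$ to deduce $p^e\mid h(a)$ in $\Z$ for every $a\in E'$, so that $h/p^e\in R$ and hence $g\in R_{(p)}$. Everything else is a direct application of the local correspondence $\varphi_p\leftrightarrow\psi_p$ of Theorem~\ref{containmentoverrings}.
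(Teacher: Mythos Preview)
Your proof is correct and follows essentially the same route as the paper: both reduce to the local picture via $R=\bigcap_p R_{(p)}$, use $R_{(p)}=\Int_{\Q}(Z_p(R),\Z_p)$ (this is Proposition~\ref{RpIntval} rather than Theorem~\ref{containmentoverrings}), and invoke Proposition~\ref{topclospolclos} to translate the density condition. The only difference is cosmetic: you spell out the denominator-clearing argument for $\Int(E',\Z)_{(p)}=\Int_{\Q}(E',\Z_p)$ and pass through an auxiliary $E'$ in the necessity, whereas the paper asserts that localization identity and works directly with $E$ as the maximal subset of $\Z$ on which $R$ is integer-valued.
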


\begin{proof}
Clearly, $E=\{a\in\Z\mid R(a)\subseteq \Z\}$ and, if $R$ is a ring of integer-valued polynomials on a subset of $\Z,$ the subset $E$ is convenient. Moreover, the equality $R=\Int(E,\Z)$ holds if and only if both rings have the same localizations at each prime $p$. For every $p$, $\Int(E,\Z)_{(p)}=\Int_{\Q}(E,\Z_p)$ and $R_{(p)}=\Int_{\Q}(Z_p(R),\Z_p)$ by Proposition~\ref{RpIntval}. Thus, by Proposition~\ref{topclospolclos}, both localizations are equal if and only if $E$ is dense in $Z_p(R)$.
\end{proof}

\begin{Ex}
For each $p,$ let us consider the following closed subset of $\Z_p:$ $F_p=\{p\}\cup (\Z_p\setminus p\Z_p)$. Let $\underline{F}=\prod_pF_p\subseteq\hZ$ and $R=\Int_{\Q}(\underline{F},\hZ).$ Does there exist $E\subseteq\Z$ such that $R=\Int(E,\Z)?$ Yes, $R=\Int(\SP,\Z)$ since, for each $p$, the topological closure of $\SP$ in $\Z_p$ is $F_p.$ Actually, the subset $E$ suggested in Corollary~\ref{simple} is $\SP\cup\{\pm 1\}$, namely the polynomial closure of $\SP$ in $\Z$ (about $\Int(\SP,\Z)$ see~\cite{CCS}). 
\end{Ex}


\end{document}